\newif\iffinal
\else\usepackage[notref,notcite]{showkeys}\fi
\newenvironment{enumeratea}{\begin{enumerate}[\upshape (a)]}{\end{enumerate}}
\def\sss{\scriptscriptstyle}
\numberwithin{equation}{section}
\definecolor{purple}{rgb}{0.9,0,0.8}
\definecolor{gray}{rgb}{0.7,0.7,0.7}
\newtheorem{thm}{Theorem}[section]
\newtheorem{theorem}{Theorem}[section]
\newtheorem{lem}[thm]{Lemma}
\newtheorem{open}[thm]{Open Problem}
\newtheorem{dfn}[thm]{Definition}
\theoremstyle{definition}
\newtheorem{ex}[thm]{Example}
\newtheorem{rmk}[thm]{Remark}
\newtheorem{rem}[thm]{Remark}
\newtheorem*{rmk*}{Remark}
\newcommand{\beq}{\begin{equation}}
\newcommand{\eeq}{\end{equation}}
\newcommand{\eps}{\varepsilon}
\newcommand{\E}{\mathbb{E}}
\newcommand{\cB}{\mathcal{B}}
\newcommand{\cE}{\mathcal{E}}
\newcommand{\cK}{\mathcal{K}}
\newcommand{\cL}{\mathcal{L}}
\newcommand{\cR}{\mathcal{R}}
\newcommand{\cX}{\mathcal{X}}
    \newcommand{\gX}{\mathfrak{X}}
    \newcommand{\gY}{\mathfrak{Y}}
\newcommand{\f}{\frac}
\newcommand{\set}[1]{\{#1\}}
\newcommand{\vX}{\mathbf{X}}\newcommand{\vY}{\mathbf{Y}}
\newcommand{\vu}{\mathbf{u}}
\newcommand{\vx}{\mathbf{x}}
\newcommand{\vy}{\mathbf{y}} 
\newcommand{\mvu}{\boldsymbol{u}}
\newcommand{\mvx}{\boldsymbol{x}}\newcommand{\mvy}{\boldsymbol{y}}
\newcommand{\mvz}{\boldsymbol{z}}
\newcommand{\mvalpha}{\boldsymbol{\alpha}}\newcommand{\mvbeta}{\boldsymbol{\beta}}
\newcommand{\mvtheta}{\boldsymbol{\theta}}
\newcommand{\bR}{\mathbb{R}}
\newcommand{\prob}{\mathbb{P}}
\begin{document}

\title[Weighted exponential random graph models]{Weighted Exponential Random graph models: Scope and large network limits}

\date{}
\subjclass[2010]{Primary: 60C05, 05C80. }
\keywords{Exponential random graph models, weighted graph, large deviations, random networks, dense graph limits, graphons, Markov Chain Monte Carlo}

\author[Bhamidi]{Shankar Bhamidi$^1$}
\author[Chakraborty]{Suman Chakraborty$^1$}
\address{$^1$Department of Statistics and Operations Research, 304 Hanes Hall, University of North Carolina, Chapel Hill, NC 27599}
\author[Cranmer]{Skyler Cranmer$^2$}
\address{$^2$Department of Political Science, The Ohio State University, 2032 Derby Hall, 154 North Oval Mall, Columbus, OH 43210}
\author[Desmarais]{Bruce Desmarais$^3$}
\address{$^3$Department of Political Science, Pennsylvania State University, 321 Pond Lab,
University Park, PA 16802}
\email{bhamidi@email.unc.edu, sumanc@live.unc.edu, cranmer.12@osu.edu, bdesmarais@psu.edu}

\maketitle

\begin{abstract}
We study models of weighted exponential random graphs in the large network limit. These models have recently been proposed to model weighted network data arising from a host of applications including socio-econometric data such as migration flows and neuroscience.  Analogous to fundamental results derived for standard (unweighted) exponential random graph models in the work of Chatterjee and Diaconis, we derive limiting results for the structure of these models as the number of nodes goes to infinity. Our results are applicable for a wide variety of base measures including measures with unbounded support. We also derive sufficient conditions for continuity of functionals in the specification of the model including conditions on nodal covariates.  
Finally we include a number of open problems to spur further understanding of this model especially in the context of applications. 
\end{abstract}

\section{Introduction}

Exponential random graph models (ERGMs) constitute one of the fundamental tools in the statistical analysis of networks \cite{holland1981exponential,wasserman1996logit,robins2007introduction,snijders2006new}. We start by describing ERGMs as these serve as the stepping stone for the model studied in this paper. Write $[n]:=\set{1,2,\ldots, n}$ and let

 \[\gX_n:=\set{(y_{ij})_{i\neq j\in [n]}: y_{ij} = y_{ji}, \;\; y_{ij}\in \set{0,1}},\] 
 denote the set of all \emph{unweighted} and undirected simple networks on $n$ vertices; here the presence of an edge between vertices $i,j$ is represented by $y_{ij} =1$ and $y_{ij} =0$ otherwise. Given an observed network $\vY:= (Y_{ij})_{i\neq j\in [n]} \in \cX_n$, one can model this observed network as a sample from a probability distribution of the form,
 \begin{equation}\label{thu:102}
 \prob_n(\vy, \mvtheta)= \frac{\exp(\mvtheta^\prime T(\vy))}{\sum_{\vx \in \gX_n} \exp(\mvtheta^\prime T(\vx))} , \qquad \vy \in \gX_n.\end{equation}
 Here for $m\geq 1$, $\mvtheta\in \bR^m$ is a parameter vector whilst $T(\cdot):\gX_n \to \bR^m$ is a vector of statistics often constructed with domain-specific motivations and knowledge and might include terms that measure clustering and reciprocity in the network, other notions of connectivity and might include node-level covariates. The aim or object of inference then from the observed network $\vY$ is this parameter vector $\mvtheta$.

  There is an enormous amount of methodology using ERGMs built for unweighted network models.  In the last few years, predominantly motivated by applications and real-world data, a host of weighted network data have arisen ranging from financial applications \cite{iori2008network}, modeling migration flows between different regions \cite{chun2008modeling,desmarais2012statistical} and brain networks \cite{simpson2011exponential}. While the development of generative models analogous to ERGMS in this context is much less developed, there has been recent progress both in methodological developments \cite{robins1999logit,krivitsky2012exponential,desmarais2012statistical} as well as computational tools to generate and derive statistical inference for these models \cite{conjec2str}. Following \cite{desmarais2012statistical}, we will refer to this general family of models as Generalized Exponential Random graph models (GERGM); a precise definition of the model is given in Section \ref{sec:results}; for closely associated settings, see \cite{krivitsky2012exponential} for model formulations and terminology that further inspired this work; for the sake of concreteness we stick with the term GERGM. Recently \cite{yin2016phase,demuse2017phase} have investigated these models when the edge weights are bounded. The aim of this work is to study this model in full generality. To fix ideas, next we informally describe the GERGM model. Let 
  \[\gY_n:=\set{(y_{ij})_{i\neq j\in [n]}: y_{ij} = y_{ji}, \;\; y_{ij}\in \mathbb{R}},\] 
  denote the set of weighted and undirected simple networks on $n$ vertices; here $y_{ij}$ represents the weight of the edge between vertices $i, j$. We do not distinguish between an edge with zero weight and an missing edge, we write $y_{ij} =0$ in either cases. Suppose $Q$ is a real valued random variable with density $q$ (with respect to the Lebesgue measure). Consider the following probability distribution on $\gY_n$
  
   \begin{equation}\label{thu216} \prob_n(\vy, \mvtheta) \propto {\exp(\mvtheta^\prime T(\vy))} \prod_{1\leq i<j \leq n} q(y_{ij}), \qquad \vy \in \gY_n.\end{equation}
Similar to ERGM, here also for $m\geq 1$, $\mvtheta\in \bR^m$ is a parameter vector and $T(\cdot):\gX_n \to \bR^m$ is a vector of statistics that captures the dependence between edges. Note that $\prob_n$ in \eqref{thu216} is a probability distribution only when $\mathbb{E}\left(\exp(\mvtheta^\prime T(\vY))\right)$ is finite, where $\set{Y_{ij}:1\leq i<j\leq n} $ is an array of independent  and identically distributed random variables with common distribution $Q$. Here $Q$ captures the inherent nature of edge-weights if there was no dependence. For example, if $Q \sim$ Bernoulli$(1/2)$, then this gives usual ERGM. $Q$ can be discrete or continuous depending on the application domain. For example if the edge-weights are non-negative integers (possibly unbounded), then a possible choice of $Q$ would be Poisson distribution with appropriate parameter, one of the examples considered in \cite{krivitsky2012exponential}. 

A common problem encountered by practitioners regarding the ERGMs is the problem of degeneracy. Informally it says that for some choices of the sufficient statistic $T$, for most of the parameter values, the probability distribution given by \eqref{thu:102} place all of its mass on either empty graphs or complete graphs (see \cite{snijders2006new}). Motivated by the empirical study in \cite{conjec2str} we investigate degeneracy phenomenon for GERGM. In particular we show that when the edge weights are standard normal distribution the GERGM edge-two-star model does not suffer from degeneracy.

In Section \ref{sec:results} we will explicitly state our assumptions and findings. Before diving into the precise statement of the model and results, let us informally outline the aims of this paper.
  
  \noindent {\bf Aims:} We had five major goals in writing this paper: 


\begin{enumeratea}
	\item Develop general theory in order to derive a formula for the limiting normalizing constant of GERGM and understand the behavior of GERGM when $n$ (number of nodes) goes to infinity with minimal continuity assumptions on the specifications (the functional $T$) based on large deviation results for symmetric random matrices \cite{ldprm} that deals with general (possibly unbounded) GERGM specifications. 
	\item Via both direct calculations as well as application of the general theory developed above, show that various base measures including the normal distribution as well as distributions with density proportional to $\exp{(-x^4)}$ satisfy the conditions required for the main results. These calculations are driven by ``proof of concept'' motivations and can serve as the starting point to illustrate the kinds of calculations required for other base measures. 
	\item Derive concrete expressions of the limits for various specifications involving ``homomorphism'' counts of motifs. 
	\item Investigate ``degeneracy" phenomenon for GERGM models. In particular we show that if the edge weights are standard normal distribution then the GERGM edge-two-star model does not suffer from degeneracy.
	\item Understand issues regarding continuity (or lack thereof) of standard functionals in the general weighted case and in particular make a start in understanding the scope of GERGM specifications.  
\end{enumeratea}

We have also included a number of open problems that we hope will motivate further work on this model. 

\subsection{Organization of the paper}
We start with some preliminary notation required to setup the formulation of the model in Section \ref{sec:graph-lim-ldp}. In Section \ref{sec:results} we describe the model and the main results. Section \ref{sec:grr-hom-res} discusses the scope of this model with reference to continuity considerations for the specification.  In Section \ref{sec:disc} we describe the relationship of this paper to existing results and literature as well as describe a number of open directions.  Finally Section \ref{sec:proof} contains the proofs of all our main results. 
\section{Graph limits and Large deviation preliminaries}
\label{sec:graph-lim-ldp}
In this section we start with some notation required to define our model and results. Much of the exposition below follows \cite{lngl}. 
A symmetric $n \times n$ random matrix $X(\omega) = \{x_{ij}\}_{1\leq i,j\leq n}$ with $x_{ij} = x_{ji}$ for all $i,j\in [n]$  can be mapped to a symmetric kernel in the following obvious manner:
\begin{equation}\label{mapintokernel}
k(x,y, \omega) = \sum_{i,j =1}^n{x_{ij}(\omega)\bold{1}_{J_i^n}(x) \bold{1}_{J_j^n}(y)},
\end{equation}
where $J_1^n = [0, \f{1}{n}]$ and for $i=2,\ldots,n$, $J_i^n$ is the interval $(\frac{i-1}{n}, \frac{i}{n}]$. Thus a probability measure on the space of symmetric random matrices results in a family of induced probability measures $Q_n$ on $D= [0,1] \times [0,1]$. $\mathcal{K}$ is defined as the space of symmetric measurable functions from $D\rightarrow \bR$. For each fixed $n$, the range of the map $X \rightarrow k$ in \eqref{mapintokernel} is a finite-dimensional subspace of step functions $\mathcal{K}_n \subset \mathcal{K}$. The cut distance \cite{lngl,gl1,gl2,gl3} in the space $\mathcal{K}$ is defined as follows,
\begin{equation}
\label{eqn:dist-def}
	d(k_1, k_2) = \sup_{|\phi|\leq 1, |\psi| \leq 1}\left|{\int(k_1(x,y) - k_2(x,y))\phi(x) \psi(y)\,dx\,dy}\right|,
\end{equation}
where $\phi$ and $\psi$ are Borel measurable functions on $[0,1]$. One can equivalently write
$$
d(k_1, k_2) = \sup_{A, B \subset [0,1]}{\left|\int_{A \times B}(k_1(x,y) - k_2(x,y)) \,dx\,dy\right|},
$$
where $A$ and $B$ are Borel subsets of $[0,1]$. We will quotient the space via the following equivalence  relation: Write $\Sigma$ as the space of all measure preserving bijections (with respect to the Lebesgue measure) $\sigma: [0,1] \rightarrow [0,1]$. For $k_1, k_2\in \cK$, say that $k_1 \sim k_2$ if, 
\[k_1(x,y) = \sigma k_2(x,y) : = k_2(\sigma x,\sigma y), \qquad \mbox{a.e. } x,y, \qquad \mbox{for some } \sigma \in \Sigma. \]  
We denote the orbit $\{\sigma k:\sigma \in \Sigma \}$ by $\tilde{k}$. Write $\tilde{\cK}:=\cK / \sim$ for the quotient space under the relation $\sim$ on $\mathcal{K}$ and $\tau$ for the natural map from $k \rightarrow \tilde{k}$. Since the distance $d$ in \eqref{eqn:dist-def} is invariant under $\sigma$, one can define a natural distance $\delta$ on $\tilde{\mathcal{K}}$ via
\begin{equation}
\label{eqn:delta-def}
	\delta(\tilde{k}_1, \tilde{k}_2) = \inf_{\sigma}{d(\sigma k_1, k_2)} = \inf_{\sigma}{d( k_1, \sigma k_2)} = \inf_{\sigma_1, \sigma_2}{d(\sigma_1 k_1, \sigma_2 k_2)},
\end{equation}
making $(\mathcal{\tilde{K}}, \delta)$ into a metric space.   \par
 Write $\tilde{Q}_n$ for the measure on $\tilde{\cK}$ obtained as the push-forward of the measure $Q_n$ as above on $\cK$ i.e. $\tilde{Q}_n (\tilde{S}) = Q_n(\tau^{-1}(\tilde{S}))$ for all measurable $\tilde{S} \subset \tilde{\mathcal{K}}$. We now introduce some notation to state a large deviation result for $\tilde{Q}_n$ proved in \cite{ldprm}. We assume that $x_{ij}$'s are independent and identically distributed with measure $\mu$ and further  
\begin{equation}\label{A1}
\int{e^{\theta x^2}}\,\mu(dx) < \infty,
\end{equation}
for all $\theta \in \bR$.
This implies in particular that the moment generating function $M(\theta) := \int_{\bR}{e^{\theta x}} \,\mu(dx)$ is finite and satisfies
\begin{equation}\label{con1A1}
\limsup_{|\theta|\rightarrow \infty}{\frac{1}{\theta^2} {\ln M(\theta)}} = 0.
\end{equation}
Thus the conjugate rate function of Cramer defined via    
\begin{equation}\label{con2A22}
h(x) := \sup_{\theta}{[\theta x - \ln M(\theta)]},
\end{equation}
satisfies
$$
\liminf_{|x| \rightarrow \infty}{\frac{h(x)}{x^2}} = +\infty.
$$
Now we define the rate function $I(\cdot)$ on $\mathcal{K}$ as follows
\begin{equation}\label{ratefunction}
I(k) = \frac{1}{2} \iint_{D}{h(k(x,y))} \, dx \,dy, \qquad k\in \cK.
\end{equation}

Note that the rate function is invariant under measure preserving bijection and thus it extends naturally to a rate function on $\mathcal{\tilde{K}}$ naturally. The following result was proven in \cite{ldprm}

\begin{theorem}[{\cite{ldprm}}]\label{thm:cha-varadhan}
	\label{thm:cha-vara}
	Under assumption (\ref{A1}), the sequence of measures $\set{\tilde{Q}_n}_{n\geq 1}$ satisfies a large deviation property with rate function $I(\cdot)$, that is, for every closed $C \subset \tilde{\mathcal{K}}$,
\begin{equation}\label{ldpclosed}
	\limsup_{n \rightarrow \infty}{\frac{1}{n^2} {\ln \tilde{Q}_n(C)}} \leq - \inf_{\tilde{k} \in C}{I(\tilde{k})},
	\end{equation}
	and for open $U \subset \tilde{\mathcal{K}}$,
	\begin{equation}\label{ldpopen}
	\liminf_{n \rightarrow \infty}{\frac{1}{n^2} {\ln \tilde{Q}_n(U)}} \geq - \inf_{\tilde{k} \in U}{I(\tilde{k})}. 
	\end{equation}
	
\end{theorem}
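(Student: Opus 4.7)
The plan is to adapt the block-refinement strategy familiar from large deviations for dense Erdős--Rényi graphs to i.i.d.\ entries with possibly unbounded support. The backbone is Cramér's theorem, which under assumption \eqref{A1} yields, at speed $n^2$, an LDP for the empirical mean of entries in any block of cardinality proportional to $n^2$, with rate function $h(\cdot)$. I would proceed in three stages: exponential tightness of $\set{\tilde Q_n}$ on $(\tilde\cK,\delta)$, the upper bound \eqref{ldpclosed}, and the matching lower bound \eqref{ldpopen}.

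For exponential tightness, I would exploit \eqref{con1A1}: given $\alpha>0$ choose $L=L(\alpha)$ so that $h(x)\geq \alpha x^2$ whenever $|x|\geq L$. A union bound together with the Chernoff inequality forces $Q_n(\max_{i,j}|x_{ij}|>L)$ to decay faster than $e^{-\alpha n^2}$. On the $L^\infty$-ball of radius $L$ the cut metric is totally bounded (Lovász--Szegedy weak regularity), yielding exponentially tight compacts $K_\alpha\subset\tilde\cK$ and reducing both bounds to considerations on uniformly bounded kernels.

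For the upper bound, partition $[n]$ into $M$ roughly equal blocks and replace $k_n$ by its block-averaged kernel $k_n^{(M)}$. The block averages are independent (up to symmetry), each a mean of $\sim (n/M)^2$ i.i.d.\ draws, so Cramér's theorem delivers a finite-dimensional LDP at speed $n^2$ with rate $\tfrac{1}{2}\iint h(k_n^{(M)})\,dx\,dy$. Since block-averaging is a contraction with respect to $\delta$ and $\delta(k_n,k_n^{(M)})\to 0$ uniformly on bounded kernels as $M\to\infty$, combining the finite-dimensional estimate with this approximation and the preceding tightness argument yields \eqref{ldpclosed} after taking $M\to\infty$. For the lower bound, fix $\tilde k^*\in U$ with $I(\tilde k^*)<\infty$ and approximate it in cut distance by a step function $\tilde k^*_{(M)}$ constant on an $M\times M$ partition. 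Tilting $\mu$ exponentially on each block so that the tilted mean matches the target shifts the law so that the block averages concentrate near $\tilde k^*_{(M)}$; the Radon--Nikodym cost is precisely $e^{-n^2 I(\tilde k^*_{(M)})+o(n^2)}$, and $I(\tilde k^*_{(M)})\to I(\tilde k^*)$ follows from Jensen's inequality applied to the convex $h$ together with an $L^1$ approximation.

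The principal obstacle will be the mismatch between the very weak cut topology and the convex rate function $I$: one must verify that block-averaging is continuous in $\delta$ on bounded kernels, that $I(k^{(M)})\leq I(k)$ with equality in the limit, and that the truncation underlying exponential tightness is compatible with the tilting construction so the truncation error in $I$ remains uniformly controlled. This last compatibility is precisely what the super-Gaussian integrability \eqref{A1}--\eqref{con1A1} is engineered to provide; without it, both the tightness bound and the Jensen-based convergence $I(k^{(M)})\to I(k)$ can fail, which is why the hypothesis cannot be relaxed to mere exponential moments.
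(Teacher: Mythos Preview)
The paper does not prove this statement; Theorem~\ref{thm:cha-varadhan} is quoted verbatim from Chatterjee--Varadhan \cite{ldprm} and used as a black box in Section~\ref{sec:proof}. There is no in-paper argument to compare your sketch against.

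That said, your outline contains a genuine gap in the exponential tightness step. You claim that for $L=L(\alpha)$ fixed, a union bound plus Chernoff gives $Q_n\bigl(\max_{i,j}|x_{ij}|>L\bigr)\le e^{-\alpha n^2}$. This is false whenever $\mu$ has unbounded support, which is exactly the regime that assumption~\eqref{A1} is designed to handle. There are $\binom{n}{2}$ i.i.d.\ entries, and under~\eqref{A1} the typical maximum grows like a power of $\log n$; for any fixed $L$ one has $Q_n\bigl(\max_{i,j}|x_{ij}|>L\bigr)=1-\bigl(1-\mu(|x|>L)\bigr)^{\binom{n}{2}}\to 1$, so the probability does not even tend to zero. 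You cannot localize to $\tilde\cK^L$ by controlling the supremum of the entries.

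The repair is to control a weaker norm of the tail. With the truncation $k_n=f_l(k_n)+g_l(k_n)$ of~\eqref{eqn:trunc} one has $f_l(k_n)\in\tilde\cK^l$ (compact by Lov\'asz--Szegedy) and
\[
\delta\bigl(\tilde k_n,\widetilde{f_l(k_n)}\bigr)\le d\bigl(k_n,f_l(k_n)\bigr)\le \|g_l(k_n)\|_{L^1(D)}=\frac{2}{n^2}\sum_{i<j}|g_l(x_{ij})|.
\]
The right-hand side is an average of i.i.d.\ non-negative variables whose exponential moments are all finite by~\eqref{A1}, so Cram\'er's theorem controls it at speed $n^2$ with a rate that can be made arbitrarily large by sending $l\to\infty$. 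This is how the super-Gaussian integrability actually enters the tightness argument; the remainder of your sketch (block averaging for the upper bound, exponential tilting on blocks for the lower bound) is the standard Chatterjee--Varadhan strategy and is correct in outline once tightness is in place.
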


\section{Generalized exponential random graph Model}
\label{sec:results}

\subsection{Model Formulation and Main Theorem}

We now formally describe the main model of interest for this paper. In words, this model is obtained by naturally tilting a base measure made up of independent entries via an appropriate specification. Let us describe each of these ingredients. 
 
\begin{dfn}[Base measure]
Let $n\geq 1$ be a fixed positive integer.  Let $P_n$ is the measure of the i.i.d. random variables $(q_{ij})_{1\leq i<j \leq n}$, where $q_{ij}$ is the edge weight of the edge $\{i,j\}$, for ${1\leq i<j \leq n}$. We assume $q_{ij} = q_{ji}$ for ${1\leq i<j \leq n}$ and, for the diagonal, let $q_{ii} =\delta_0$ be the unit mass at zero, for $i = 1,\ldots,n$ independent of the remaining edges. We map the matrix $(q_{ij})_{1\leq i,j \leq n}$ into an element of $\cK$ using \eqref{mapintokernel}. Let $Q_n$ be the measure obtained by the induced measure on $\cK$ and, $\tilde{Q}_n$ be the corresponding push-forward measure on $\tilde{\cK}$. 
Call $\tilde{Q}_n$ the $\bold{base}$ $\bold{measure}$.
\end{dfn}
Next we formally define the generalized exponential random graph model.

\begin{dfn}[GERGM]
Fix a function $T:\mathcal{\tilde{K}} \to \bR$. Then the generalized exponential random graph model \cite{desmarais2012statistical,krivitsky2012exponential} is a probability measure $\tilde{R}_n$ on $\tilde{\cK}$ defined via tilting the base measure $\tilde{Q}_n$ using $T$. Formally,

\begin{equation}\label{ermmdfn}
\, d \tilde{R}_n(\tilde{k}) = \exp\{n^2( T(\tilde{k}) - \psi_n)\} \, d\tilde{Q}_n(\tilde {k}), \qquad \tilde{k}\in \tilde{\cK}.
\end{equation}
\end{dfn}
\begin{rmk}
The base measure models the weights of the edges. For example, if the edge weights in a network are non-negative then the $q_{ij}$'s should be taken as non-negative random variables. The function $T$ controls the dependence between the edges. $T$ is usually a function of the graph under consideration. For example, in ERGM, choices of $T$ include (appropriately scaled) number of edges, triangles, two-stars etc. Although the definition of GERGM is valid for any function $T$, in this paper, we will focus on some popular choices of $T$ that includes weighted homomorphism densities of triangles, two-stars etc. These are discussed in detail in section \ref{sec:grr-hom-res}.
\end{rmk}

  The initial goal of this Section is to understand the asymptotics for the partition function or normalizing constant $\psi_n$. Later sections develop the ramifications of this asymptotics. First we need some further notation.  
  From \eqref{ermmdfn} it is easy to see that,
\begin{equation} \label{ermm}
\psi_n  = \frac{1}{n^2} \ln{\int_{ \mathcal{\tilde{K}}}{e^{n^2T(\tilde{k})}} \, d \tilde{Q}_n(\tilde{k})}.
\end{equation} 
We need the following truncation operator on $\bR$. Note that for any $q\in \bR$ and fixed $l>0$, one may decompose $q = f_l(q)+ g_l(q)$ where,
\begin{align}
f_l(q) & = q \text{         if   } |q|\leq l, \notag \\
&= l \text{          if   } q\geq l, \notag\\
&= -l \text{   if   } q\leq -l. \label{eqn:trunc}
\end{align}
Obviously  $g_l(q) = q - f_l(q)$. This decomposition extends naturally (when applied entry-wise) to $\cK$ and thus to $\tilde{\cK}$. We write the corresponding decomposition of $k = f_l(k) + g_l(k)$ for $k \in \mathcal{{K}}$ and analogously for $\tilde{k}\in \tilde{\cK}$. Write $\mathcal{K}^l = \{k \in \mathcal{K}: |k(x,y)|\le l\}$. We will need some smoothness assumptions on the function $T$: 

{\bf (C1):} Suppose that for each fixed $l> 0$,  $T$ is a bounded continuous function in cut-metric when restricted to $\mathcal{K}^l $ and further satisfies 
\begin{equation}\label{assT}
\int_{ \tilde{\mathcal{K}}}{\exp(n^2 T(\tilde{k}))} \, d \tilde{Q}_n(\tilde{k}) < \infty,
\end{equation}

{\bf (C2):} $T(\tilde{k}_0) - I(\tilde{k}_0) \geq 0$, for some  fixed $\tilde{k}_0 \in \mathcal{\tilde{K}}$ and further, given any $\epsilon> 0$, there exists fixed $\epsilon'=\epsilon^\prime(\epsilon) >0$ such that, 
\begin{equation}\label{assneglect}
\limsup_{l \rightarrow \infty}\limsup_{n \rightarrow \infty}\f{1}{n^2} \ln \int_{ \{T(\tilde{k}) - T(f_l(\tilde{k})) \geq \epsilon\}} e^{n^2T(\tilde{k})} \, d\tilde{Q}_n(\tilde{k}) \leq -\epsilon',
\end{equation}

{\bf (C2)$^\prime$:} 
Given any $\epsilon>0$,
\begin{equation}\label{assneglect1}
\limsup_{l \rightarrow \infty}\limsup_{n \rightarrow \infty}\f{1}{n^2} \ln \int_{ \{T(\tilde{k}) - T(f_l(\tilde{k})) \geq \epsilon\}} e^{n^2T(\tilde{k})} \, d\tilde{Q}_n(\tilde{k}) = -\infty.
\end{equation}

\begin{thm}\label{normconst}
  Assume condition {\bf (C1)} and either {\bf (C2)} or {\bf (C2)$^\prime$}. Then we have the following evaluation of the normalizing constant: 
\begin{equation}\label{variaeqn}
\psi = \lim_{n \rightarrow \infty}{\psi_n} = \lim_{l \rightarrow \infty}\sup_{\tilde{k} \in \mathcal{\tilde{K}}^l}{(T(\tilde{k}) - I(\tilde{k}))}
\end{equation}
Here $I$ is the rate function defined in (\ref{ratefunction}).
\end{thm}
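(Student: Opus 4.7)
The plan is a standard truncation-and-Varadhan argument: at each level $l$, apply Varadhan's lemma to the truncated functional $T_l := T\circ f_l$, and then pass to the limit $l\to\infty$ using the tail control from (C2) or (C2)$'$.

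First I would set $T_l := T\circ f_l: \tilde{\mathcal K}\to\mathbb R$. Since $f_l$ is a pointwise $1$-Lipschitz map of $\tilde{\mathcal K}$ into $\tilde{\mathcal K}^l$, a short verification shows it is continuous in the cut metric, so $T_l$ is bounded continuous on $(\tilde{\mathcal K},\delta)$ by (C1). The LDP of Theorem \ref{thm:cha-varadhan} then gives, via the bounded-continuous form of Varadhan's lemma,
\begin{equation*}
\Psi_l \;:=\; \lim_{n\to\infty}\frac{1}{n^2}\ln\int_{\tilde{\mathcal K}}e^{n^2 T_l(\tilde k)}\,d\tilde Q_n(\tilde k)
\;=\;\sup_{\tilde k\in\tilde{\mathcal K}}\bigl(T_l(\tilde k)-I(\tilde k)\bigr).
\end{equation*}
A convexity argument identifies $\Psi_l=\sup_{\tilde k\in\tilde{\mathcal K}^l}(T(\tilde k)-I(\tilde k))$: restriction to $\tilde{\mathcal K}^l$ gives one inequality for free because $f_l|_{\tilde{\mathcal K}^l}=\mathrm{id}$; for the reverse, once $l$ exceeds the minimizer of the convex $h$, pointwise monotonicity of $h$ on $\{|q|>l\}$ yields $h(f_l(q))\le h(q)$, hence $I(f_l(\tilde k))\le I(\tilde k)$, and substituting $\tilde h:=f_l(\tilde k)\in\tilde{\mathcal K}^l$ delivers the bound.

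For the upper bound on $\psi_n$ I would split $\int e^{n^2 T}\,d\tilde Q_n=\int_{A_{l,\epsilon}^c}+\int_{A_{l,\epsilon}}$, where $A_{l,\epsilon}:=\{T-T_l\ge\epsilon\}$. On $A_{l,\epsilon}^c$ the bound $T\le T_l+\epsilon$ yields $\int_{A_{l,\epsilon}^c}\le e^{n^2\epsilon}\int e^{n^2 T_l}\,d\tilde Q_n=e^{n^2(\Psi_l+\epsilon+o(1))}$; on $A_{l,\epsilon}$ hypotheses (C2) or (C2)$'$ give $\limsup_l\limsup_n n^{-2}\ln\int_{A_{l,\epsilon}}e^{n^2 T}\,d\tilde Q_n\le -\epsilon'$ (resp.\ $-\infty$). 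Since $\lim_l\Psi_l\ge 0$—which follows from (C2) by applying $h$-convexity to a bounded truncation of the $\tilde k_0$ provided there—the first piece dominates and sending $l\to\infty$ and $\epsilon\downarrow 0$ produces $\limsup_n\psi_n\le\lim_l\sup_{\tilde{\mathcal K}^l}(T-I)$.

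For the lower bound the subtlety is that $T$ is only guaranteed to be continuous on $\tilde{\mathcal K}^l$, so Varadhan's open-set inequality cannot be applied directly on a cut-metric ball around a near-optimizer. Instead, I would fix $\tilde k^*\in\tilde{\mathcal K}^l$ with $T(\tilde k^*)-I(\tilde k^*)\ge\sup_{\tilde{\mathcal K}^l}(T-I)-\epsilon$ and choose $\eta>0$ (using continuity of $T$ on $\tilde{\mathcal K}^l$) so that $T\ge T(\tilde k^*)-\epsilon$ on $B_\eta(\tilde k^*)\cap\tilde{\mathcal K}^l$, giving
\begin{equation*}
\int e^{n^2 T}\,d\tilde Q_n \;\ge\; e^{n^2(T(\tilde k^*)-\epsilon)}\,\tilde Q_n\bigl(B_\eta(\tilde k^*)\cap\tilde{\mathcal K}^l\bigr).
\end{equation*}
The probability is estimated by conditioning on $\tilde{\mathcal K}^l$: under $\tilde Q_n(\cdot\,|\,\tilde{\mathcal K}^l)$ the entries are i.i.d.\ with law $\mu|_{[-l,l]}/\mu([-l,l])$, which has compact support and satisfies Theorem \ref{thm:cha-varadhan} with a perturbed rate function $I^l$. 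The open-set lower bound of the LDP combined with $\tilde Q_n(\tilde{\mathcal K}^l)=\mu([-l,l])^{\binom n 2}$ yields $\liminf_n n^{-2}\ln\tilde Q_n(B_\eta(\tilde k^*)\cap\tilde{\mathcal K}^l)\ge -I^l(\tilde k^*)+\tfrac12\ln\mu([-l,l])$; letting $l\to\infty$ (so $I^l(\tilde k^*)\to I(\tilde k^*)$ and $\ln\mu([-l,l])\to 0$) and $\epsilon\downarrow 0$ completes the matching lower bound. The principal obstacle is this lower bound, where uniform control of $I^l\to I$ along a sequence of near-optimizers and careful interchange of the $n,l,\epsilon$ limits is needed; once this is handled the rest reduces to a routine truncation argument.
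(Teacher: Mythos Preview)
Your overall architecture (truncate, split, apply an LDP/Varadhan step at level $l$, then let $l\to\infty$) matches the paper's, but there is a real gap in the key step.

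\textbf{The gap.} The claim that $T_l:=T\circ f_l$ is continuous on $(\tilde{\mathcal K},\delta)$ is false, because pointwise truncation $f_l:\tilde{\mathcal K}\to\tilde{\mathcal K}^l$ is \emph{not} continuous in the cut metric. A concrete obstruction: take $\{0,2l\}$-valued step kernels $k_n$ whose $2l$-entries form a quasirandom pattern of density $\tfrac12$, so $k_n\to l$ (the constant kernel) in cut distance. Then $f_l(k_n)$ is $\{0,l\}$-valued with the same pattern and converges in cut distance to the constant $l/2$, whereas $f_l(l)=l$. Hence $f_l$ is discontinuous at the constant $l$, and you cannot invoke Varadhan's lemma for $T_l$ on $\tilde{\mathcal K}$. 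This breaks your computation of $\Psi_l$ and with it the upper bound.

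\textbf{How the paper handles this.} After the same split and the same bound $\int_{A_{l,\epsilon}^c}e^{n^2T}\,d\tilde Q_n\le e^{n^2\epsilon}\int e^{n^2T_l}\,d\tilde Q_n$, the paper does \emph{not} treat $T_l$ as a function on $\tilde{\mathcal K}$. Instead it changes variables,
\[
\int_{\tilde{\mathcal K}} e^{n^2T_l}\,d\tilde Q_n \;=\; \int_{\tilde{\mathcal K}^l} e^{n^2T}\,d(\tilde Q_n\circ f_l^{-1}),
\]
and works on the \emph{compact} space $\tilde{\mathcal K}^l$, where $T$ itself is bounded continuous by (C1). The pushforward $\tilde Q_n\circ f_l^{-1}$ is the law of the kernel built from the bounded i.i.d.\ entries $f_l(q_{ij})$, so Theorem~\ref{thm:cha-vara} applies with rate function $I_l(k)=\tfrac12\iint h_l(k)$, where $h_l$ is the Cram\'er transform of the law of $f_l(q)$. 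A finite closed cover of $\tilde{\mathcal K}^l$ by level sets of $T$ then gives the Varadhan-type upper bound $\sup_{\tilde{\mathcal K}^l}(T-I_l)$, and Lemma~\ref{lemteclem1} ($h\le h_l+\epsilon$ for all large $l$) converts $I_l$ to $I$. Your convexity observation $I(f_l(\tilde k))\le I(\tilde k)$ is morally the same device as this lemma, but it only becomes usable once the integral has been pushed to $\tilde{\mathcal K}^l$.

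\textbf{Lower bound.} The paper's route is considerably simpler than your conditioning argument: it takes the relatively open sets $\tilde U_a^l=T^{-1}(a,a+\epsilon)\cap\tilde{\mathcal K}^l$, bounds $\int e^{n^2T}\,d\tilde Q_n\ge e^{n^2a}\tilde Q_n(\tilde U_a^l)$, and applies the LDP lower bound \eqref{ldpopen} for $\tilde Q_n$ itself, giving $\liminf_n\psi_n\ge a-\inf_{\tilde U_a^l}I$ directly with the original rate function $I$. No conditional law, no auxiliary rate function $I^l$, and no uniform $I^l\to I$ control is needed. Your conditioning approach is not wrong in spirit, but it manufactures exactly the ``principal obstacle'' you flag at the end; the paper simply avoids it.
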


\begin{rem}
	Note that \eqref{assneglect1} implies \eqref{assneglect}. It is sufficient to assume \eqref{assneglect1} to prove the Theorem \ref{normconst} but we have used the fact $T(\tilde{k}_0) - I(\tilde{k}_0) \geq 0$, for some  fixed $\tilde{k}_0 \in \mathcal{\tilde{K}}$ together with \eqref{assneglect} to prove Theorem \ref{normconst}.
\end{rem}

Our next goal is to establish asymptotic behavior of a typical realization from the GERGM model as $n\to\infty$. 
 We will prove that as long as the maximizers of $T(\tilde{k})-I(\tilde{k})$ are finite we will be able to say that a typical graph will concentrate around the maximizers, where the distance is measured in cut-metric as defined in \eqref{eqn:delta-def}. This allows one to extract important information about  graph properties that are continuous in cut-metric, including homomorphism densities of small subgraphs. Let us denote $\tilde{F}^*$  as the set of maximizers of $T(\tilde{k})-I(\tilde{k})$.  By  \cite{gl9}, $\mathcal{\tilde{K}}^l$ is a compact in $\mathcal{\tilde{K}}$ for each $l>0$. Further $I$ is lower semi-continuous by \cite{ldprm}. Also we assume that the maximizers of $T(\tilde{k}) - I(\tilde{k})$ lie in the set $\tilde{\mathcal{K}}^{l_0}$ for some $l_0>0$.

\begin{thm}\label{ermmconcentration}
Consider the distribution $R_n$ as in \eqref{ermmdfn} and assume that the functional $T$ satisfies condition {\bf (C1)} and either {\bf (C2)} or {\bf (C2)}$^{\prime}$ so that the assertion of Theorem \ref{normconst} holds. Further assume that there exists $l_0 >  0$ such that, the maximizers of $T(\tilde{k}) - I(\tilde{k})$ are in the set $\tilde{\mathcal{K}}^{l_0}$.
 Then for any $\eta>0$ there exist constants $C,\gamma >0$ such that, 
\begin{equation}\label{eqn:ermmconcentration}
R_n(\delta(\tilde{k}, \tilde{F}^*) \ge \eta) \le C e^{-n^2 \gamma}
\end{equation}
for all $n \geq 1$.
\end{thm}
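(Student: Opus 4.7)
The plan is to adapt the truncation-and-LDP mechanism behind Theorem~\ref{normconst} to the closed set $A_\eta := \{\tilde{k} \in \tilde{\mathcal{K}} : \delta(\tilde{k}, \tilde{F}^*) \geq \eta\}$. Writing
\[
R_n(A_\eta) = e^{-n^2 \psi_n} \int_{A_\eta} e^{n^2 T(\tilde{k})} \, d\tilde{Q}_n(\tilde{k})
\]
and using $\psi_n \to \psi$ from Theorem~\ref{normconst}, it suffices to produce $\gamma^\prime > 0$ with $\limsup_n \frac{1}{n^2} \log \int_{A_\eta} e^{n^2 T} d\tilde{Q}_n \leq \psi - 2\gamma^\prime$, from which $R_n(A_\eta) \leq e^{-n^2 \gamma^\prime}$ for all large $n$, absorbing the finitely many small $n$ into $C$.

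The quantitative engine is a bounded-level strict gap. For any $l \geq l_0$, the set $A_\eta \cap \tilde{\mathcal{K}}^l$ is compact in cut-metric (since $\tilde{\mathcal{K}}^l$ is compact by \cite{gl9} and $A_\eta$ is closed). On this set $T$ is continuous by (C1) and $I$ is lower semi-continuous by \cite{ldprm}, so $T - I$ is upper semi-continuous and attains its supremum $\Psi_l^\eta$. Because $\tilde{F}^*$, the full set of maximizers of $T - I$, lies in $\tilde{\mathcal{K}}^{l_0}$ and is disjoint from $A_\eta$ by definition, one must have $\Psi_l^\eta < \psi$ strictly; otherwise a maximizer would sit in $A_\eta$. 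Set $\beta := \psi - \Psi_{l_0}^\eta > 0$.

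Fix $\epsilon \in (0, \beta/4)$ and split
\[
\int_{A_\eta} e^{n^2 T} \, d\tilde{Q}_n \;\leq\; \int_{A_\eta \cap \{T(\tilde{k}) - T(f_l(\tilde{k})) \leq \epsilon\}} e^{n^2 T} \, d\tilde{Q}_n \;+\; \int_{\{T(\tilde{k}) - T(f_l(\tilde{k})) > \epsilon\}} e^{n^2 T} \, d\tilde{Q}_n.
\]
Hypothesis (C2) (or (C2)$^\prime$) controls the second integral: for large $l$ and $n$ its log-rate is at most $-\epsilon^\prime < 0$. On the first integration region $T(\tilde{k}) \leq T(f_l(\tilde{k})) + \epsilon$, so that piece is at most $e^{n^2 \epsilon} \int_{A_\eta} e^{n^2 T(f_l(\tilde{k}))} d\tilde{Q}_n$. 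Because $f_l$ is $1$-Lipschitz in cut-metric and $T$ is continuous on $\tilde{\mathcal{K}}^l$, the map $\tilde{k} \mapsto T(f_l(\tilde{k}))$ is bounded continuous on $\tilde{\mathcal{K}}$; Varadhan's lemma applied via Theorem~\ref{thm:cha-varadhan} on the closed set $A_\eta$, combined with the monotonicity $I(f_l(\tilde{k})) \leq I(\tilde{k})$ (from convexity of $h$ once $l$ exceeds the mean of $\mu$), bounds the log-rate of this integral by $\Psi_l^\eta + o_l(1)$. Taking $\gamma^\prime := \min(\beta/4,\, \epsilon^\prime/4)$ then closes the estimate.

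The main technical obstacle is the identification of the Varadhan supremum with $\Psi_l^\eta$: a priori the lemma yields $\sup_{\tilde{k} \in A_\eta}(T(f_l(\tilde{k})) - I(\tilde{k}))$, but the bounded-level gap is for the supremum of $T - I$ over $A_\eta \cap \tilde{\mathcal{K}}^l$. Bridging these requires not only the monotonicity $I \circ f_l \leq I$ but also a cut-metric estimate ensuring $f_l(\tilde{k}) \in A_{\eta/2}$ on the relevant portion of the integration region; the complementary event, where $\tilde{k}$ lies "far" from its truncation, must be absorbed into the (C2)-controlled tail via a further refinement of the split (possibly shrinking $\epsilon$ again). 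Executing this layered truncation carefully is where the bulk of the technical work in the proof will lie, after which combining the two pieces is routine.
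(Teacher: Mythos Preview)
Your approach is essentially the paper's: the same decomposition of $R_n(A_\eta)$ into the truncation-bad piece $\{T(\tilde k)-T(f_l(\tilde k))>\epsilon\}$ handled by (C2)/(C2)$'$, and the truncation-good piece handled by an LDP upper bound on the closed set $A_\eta$, with the final gap coming from $\sup_{A_\eta}(T-I)<\psi$. The only cosmetic difference is that the paper does not invoke Varadhan's lemma abstractly but instead reuses the finite covering $\{\tilde C_a^l\}$ from the proof of Theorem~\ref{normconst} together with the LDP for the pushforward $\tilde Q_n\circ f_l^{-1}$ (rate function $I_l$), and then converts $I_l$ back to $I$ via Lemma~\ref{lemteclem1}.

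Where the two diverge is precisely at the obstacle you isolate. The paper writes the good-part bound as $|r_l|\sup_a e^{n^2(a+2\epsilon)}\,\tilde Q_n f_l^{-1}(\tilde C_a^l\cap\tilde A)$ and then applies the LDP for $\tilde Q_n f_l^{-1}$ to the closed set $\tilde C_a^l\cap\tilde A\subset\tilde{\mathcal K}^l$, landing directly on $\sup_{\tilde A\cap\tilde{\mathcal K}^l}(T-I_l)$; it does not pause to ask whether $f_l$ carries $\tilde A$ into itself, nor does it introduce any $A_{\eta/2}$ layer or cut-metric comparison of $\tilde k$ with $f_l(\tilde k)$. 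In effect the paper's route sidesteps (or, read less charitably, elides) exactly the point you flag. Your proposed fix via $f_l(\tilde k)\in A_{\eta/2}$ is more scrupulous than what the paper writes, but if you follow the paper's device of working with $\tilde Q_n f_l^{-1}$ and $I_l$ on subsets of $\tilde{\mathcal K}^l$ you can avoid the layered-truncation detour you anticipate.
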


\begin{rmk}
Theorem \ref{ermmconcentration} suggests that the GERGM model defined in \eqref{ermm} concentrates around the set of maximizers of $T(\tilde{k})-I(\tilde{k})$ when the number of nodes are growing large, we will use this fact to establish the properties of some popular GERGM models. 
\end{rmk}
Suppose that $H$ is an unweighted  graph, let $V(H)$ be the set of vertices and $E(H)$ be the set of edges of $H$. Also let $|E(H)| =e(H)$. The homomorphism density of a graph $H$ into a kernel $k$ is denoted by $t(H, k)$ and is given as 
\begin{equation}\label{homdensimple}
t(H, k) = \int_{[0, 1]^{|V(F)|}}   {\prod_{(k,l) \in E(H)}k(x_k, x_l)} \, \prod_{k \in V(F)}{dx_k}.
\end{equation}
\begin{rmk}
A typical homomorphism density of a graph $H$ into $G$ can be written in the same way as \eqref{homdensimple} where the kernel $k$ in \eqref{homdensimple} is formed by mapping $G$ into a kernel using \eqref{mapintokernel}. Also note that $t(H, k) = t(H, \tilde{k})$. We discuss this in more detail in Section \ref{sec:grr-hom-res}. 
\end{rmk}

Suppose that $H_1$ is a graph with two vertices and a single edge joining these vertices and $H_i$'s are graphs with at least two edges for $2\leq i\leq s$. The following theorem gives us the concentration of a typical GERGM model where 
\begin{equation}\label{gergm}
T(k) = \sum_{i=1}^s{\beta_i t(H_i, k)},
\end{equation}
for $\beta_i \geq0$ for all $i\geq2$. It is known that $t(H_i, k)$'s are continuous in cut-metric \cite{gl2} when restricted to $\cK^l$ for some $l>0$, thus linear combination of them is also continuous.We say that a parameter set $\mvbeta:= (\beta_1, \ldots, \beta_s)$ for a collection of statistics $(H_1, H_2, \ldots, H_k)$ is admissible if $T(\cdot)$ satisfies \eqref{assT} and either {\bf (C2)} or {\bf (C2)}$^{\prime}$. Finally, for $u\in \bR$ define $k(x,y) = u$ for all $x,y\in [0,1]$ and let $\tilde{k}^u$ denote the image of this constant function in $\tilde{\cK}$.

\begin{thm}
	\label{thm:spec-gergm}
Consider the GERGM model with  $T(k) = \sum_{i=1}^s{\beta_i t(H_i, k)}$ where $\mvbeta:=(\beta_1,\ldots, \beta_s)$ are admissible and $\beta_2,\ldots, \beta_s$ are non negative real numbers. Also suppose either the kernel $k$ is non-negative or $e(H_i)$'s are even positive integers for all $2\leq i \leq s$. Then the value of the normalizing constant is given by
\begin{equation}\label{stdnormconst}
\lim_{n \rightarrow \infty}{\psi_n} = \sup_{u }{\left(\sum_{i=1}^s \beta_i u^{e(H_i)} - I(u)\right)}
\end{equation}
Let $K$ be the set of maximizers of the function $g(\cdot)$ defined via $g(u):= {\sum_{i=1}^s \beta_i u^{e(H_i)} - I(u)} $. Assume that,
\begin{equation}\label{finitemaxermm}
\lim_{|u| \rightarrow \infty}{\sum_{i=1}^s \beta_i u^{e(H_i)} - I(u)} = -\infty.
\end{equation}
 Then $K$ has finitely many elements and 
\begin{equation}\label{hightemp}
\min_{u \in K}\delta(\tilde{k}_n, \tilde{k}^u) \rightarrow 0.
\end{equation}
almost surely.
\end{thm}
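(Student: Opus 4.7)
The plan is to combine Theorems \ref{normconst} and \ref{ermmconcentration} with a Hölder-type reduction of the variational problem to one dimension, and then to upgrade convergence in probability to almost-sure convergence via Borel--Cantelli. First I would verify the hypotheses of Theorem \ref{normconst}: for each $l>0$, each homomorphism density $t(H_i,\cdot)$ is bounded by $l^{e(H_i)}$ and continuous in cut-metric on $\tilde{\cK}^l$ by standard graphon theory, so the finite linear combination $T$ satisfies condition {\bf (C1)}; the admissibility hypothesis directly supplies {\bf (C2)} or {\bf (C2)}$^{\prime}$. Thus
\begin{equation*}
\psi = \lim_{l\to\infty}\sup_{\tilde{k}\in \tilde{\cK}^l}\bigl[T(\tilde{k}) - I(\tilde{k})\bigr].
\end{equation*}

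The core step is to show this supremum collapses to the one-dimensional optimization over constant kernels. The lower bound is immediate because each $\tilde{k}^u$ is an admissible competitor with $T(\tilde{k}^u) - I(\tilde{k}^u) = \sum_i \beta_i u^{e(H_i)} - I(u)$. For the matching upper bound I would invoke the generalized Hölder (Finner) inequality: writing $t(H_i, k) = \int \prod_{e\in E(H_i)} k(x_e)\,dx_{V(H_i)}$ and bounding each factor in $L^{e(H_i)}$ over $[0,1]^{|V(H_i)|}$ yields $|t(H_i, k)| \le \int\int |k|^{e(H_i)}$, and under either sign/parity hypothesis of the theorem this equals $\int\int k^{e(H_i)}$. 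Coupled with $\beta_i\ge 0$ for $i\ge 2$ and the identity $t(H_1,k) = \int\int k$, this gives
\begin{equation*}
T(\tilde{k}) - I(\tilde{k}) \;\le\; \int_{[0,1]^2}\!\Bigl[\beta_1 k(x,y) + \sum_{i\ge 2}\beta_i k(x,y)^{e(H_i)} - \tfrac12 h(k(x,y))\Bigr]dx\,dy \;\le\; \sup_{u\in\bR} g(u),
\end{equation*}
where $g(u) := \sum_i \beta_i u^{e(H_i)} - I(u)$; the final step is pointwise optimization combined with $|[0,1]^2|=1$. This proves \eqref{stdnormconst}.

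For the concentration assertion I would first show $K$ is finite: the growth condition \eqref{finitemaxermm} confines $\argmax g$ to a compact interval, on which $g$ is real-analytic (since $h$ is the Legendre conjugate of the log-MGF and is real-analytic on the interior of its effective domain for the base measures of interest), so the set of global maximizers is the zero set of a nontrivial real-analytic function on a compact set and hence finite. I would then identify the variational set $\tilde{F}^*$ with $\{\tilde{k}^u : u \in K\}$ by inspecting the equality cases above: equality in Hölder/Finner forces $|k|$ essentially constant, and the resulting constraint that $\prod_{e\in E(H_i)} k(x_e)$ is essentially a positive constant rules out sign-mixing whenever some $H_i$ with $e(H_i)\ge 2$ has $\beta_i > 0$, forcing $k$ itself to be essentially constant; pointwise optimality then pins that constant to an element of $K$.

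Since $K$ is finite, $\tilde{F}^*\subseteq \tilde{\cK}^{l_0}$ with $l_0 := \max_{u\in K}|u|+1$, so Theorem \ref{ermmconcentration} applies and yields the exponential bound $R_n(\min_{u\in K}\delta(\tilde{k}_n, \tilde{k}^u) \ge \eta) \le Ce^{-n^2\gamma}$; summability over $n$ together with Borel--Cantelli then upgrades this to the almost-sure statement \eqref{hightemp}. The main obstacle is the Hölder step together with its equality analysis: setting up Finner-type inequalities cleanly for arbitrary $H_i$, carrying the sign bookkeeping in the signed case where $e(H_i)$ is even, and then pinning down exactly which kernels saturate these inequalities so that $\tilde{F}^* = \{\tilde{k}^u : u\in K\}$ is the most delicate part of the argument.
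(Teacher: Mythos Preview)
Your approach is essentially the same as the paper's: apply H\"older to bound each $t(H_i,k)$ by $\iint k^{e(H_i)}$, reduce the variational problem to a one-dimensional supremum over constants, and then invoke Theorems~\ref{normconst} and~\ref{ermmconcentration}. The only notable differences are in bookkeeping: the paper defers the equality case (that only constant kernels maximize) to \cite{cha} rather than arguing it directly, and for finiteness of $K$ the paper appeals to ``$I$ convex, polynomial minus convex has finitely many maximizers on a compact interval'' whereas you use real-analyticity of $h$; your explicit Borel--Cantelli step is left implicit in the paper.
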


\begin{rmk}
Note that there might be one or more solution to the variational problem \eqref{stdnormconst}. In \cite{shankar} the authors described the subset of parameter regime $\beta_i\geq 0$ for $i\geq 2$ as ``high-temperature regime" when $\sum_{i=1}^s \beta_i u^{e(H_i)} - I(u)$ has an unique maximizer. Also in the context of unweighted graph \cite{cha} defined the parameter regime to be replica symmetric phase when all the maximizers of $T(k) - I(k)$ are constant functions.  
\end{rmk}

We have already seen the limiting normalizing constant captures important information about the model. Our next result states it is in fact a continuous function of $\mvbeta$. We will use the following Theorem later to obtain result on the ``degeneracy phenomenon". 
\begin{thm}\label{thm:partfuncont}
Consider the GERGM model with  $T(k) = \sum_{i=1}^s{\beta_i t(H_i, k)}$, where $e(H_i)$'s are positive even integers for all $2\leq i \leq s$ and the base measure is supported on the whole real line,  satisfying \eqref{finitemaxermm} for all $\mvbeta \in B$ for some $B$, where $B$ is an open subset of $\bR \times \bR_+ \times \ldots \times \bR_+$. 
Further assume that for all $\mvbeta \in B$, the GERGM model as defined above is admissible and $\beta_2,\ldots, \beta_s$ are non negative real numbers. Then the limiting normalizing constant is continuous in $\mvbeta$.  \end{thm}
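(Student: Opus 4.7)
The plan is to work with the variational characterization
\[
\psi(\mvbeta) \;:=\; \lim_{n\to\infty}\psi_n \;=\; \sup_{u\in\R} g_{\mvbeta}(u),\qquad g_{\mvbeta}(u) \;:=\; \sum_{i=1}^s \beta_i\, u^{e(H_i)} \;-\; I(u),
\]
provided by Theorem~\ref{thm:spec-gergm}, and to show that the right-hand side depends continuously on $\mvbeta$. For each fixed $u$, the map $\mvbeta\mapsto g_{\mvbeta}(u)$ is affine (hence continuous) in $\mvbeta$, so the pointwise supremum $\psi$ is automatically lower semicontinuous. The substance of the proof is the matching upper bound, for which uniform control of the tails in $u$ as $\mvbeta$ varies is required.

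Fix $\mvbeta^*\in B$. Since $B$ is open in $\R\times\R_+^{s-1}$, choose $\delta>0$ such that the closed box
\[
N \;:=\; [\beta_1^*-\delta,\,\beta_1^*+\delta]\times \prod_{i=2}^{s}\bigl[\max(0,\beta_i^*-\delta),\; \beta_i^*+\delta\bigr]
\]
is contained in $B$; in particular the two corners
\[
\mvbeta^{\pm} \;:=\; (\beta_1^*\pm\delta,\;\beta_2^*+\delta,\;\ldots,\;\beta_s^*+\delta)
\]
both lie in $B$. Because $e(H_i)$ is an even positive integer and $\beta_i\ge 0$ for $i\ge 2$, for every $\mvbeta\in N$
\[
g_{\mvbeta}(u)\;\le\; g_{\mvbeta^{+}}(u)\ \text{for }u\ge 0,\qquad g_{\mvbeta}(u)\;\le\; g_{\mvbeta^{-}}(u)\ \text{for }u\le 0,
\]
since maximizing a linear functional of $\mvbeta$ on the box $N$ forces $\beta_i=\beta_i^*+\delta$ for $i\ge 2$ (the sign of $u^{e(H_i)}$ being nonnegative) and $\beta_1=\beta_1^*\pm\delta$ according to the sign of $u$. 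Applying hypothesis \eqref{finitemaxermm} at $\mvbeta^{\pm}\in B$ then furnishes $R>0$ with $g_{\mvbeta^{+}}(u)<\psi(\mvbeta^*)-1$ for $u>R$ and $g_{\mvbeta^{-}}(u)<\psi(\mvbeta^*)-1$ for $u<-R$. Hence for every $\mvbeta\in N$, $g_{\mvbeta}(u)<\psi(\mvbeta^*)-1$ whenever $|u|>R$; this is the desired uniform coercivity.

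To conclude, combine uniform coercivity with continuity in $\mvbeta$ on the compact window $[-R,R]$. The lower semicontinuity of $\psi$ forces $\psi(\mvbeta)>\psi(\mvbeta^*)-\tfrac{1}{2}$ for $\mvbeta$ in a small neighborhood of $\mvbeta^*$, so the coercivity bound guarantees that both $\psi(\mvbeta)$ and $\psi(\mvbeta^*)$ are attained as suprema over $[-R,R]$. On this compact set,
\[
\sup_{|u|\le R}\bigl|g_{\mvbeta}(u)-g_{\mvbeta^*}(u)\bigr| \;\le\; \sum_{i=1}^s|\beta_i-\beta_i^*|\,R^{e(H_i)},
\]
and the elementary inequality $|\sup f-\sup h|\le \sup|f-h|$ yields
\[
\bigl|\psi(\mvbeta)-\psi(\mvbeta^*)\bigr| \;\le\; \sum_{i=1}^s|\beta_i-\beta_i^*|\,R^{e(H_i)},
\]
so $\psi$ is (locally Lipschitz and in particular) continuous at $\mvbeta^*$.

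The main obstacle is the uniform coercivity step: one must extract a single radius $R$ confining the supremum for every $\mvbeta$ in a neighborhood of $\mvbeta^*$. Two structural features make this work: the evenness of $e(H_i)$ for $i\ge 2$ reduces the worst-case linear functional $\max_{\mvbeta\in N}g_{\mvbeta}(u)$ to just the two corners $\mvbeta^{\pm}$ (depending only on the sign of $u$), and the openness of $B$ ensures these corners still satisfy \eqref{finitemaxermm}. Without either ingredient one would have to control uncountably many tail directions simultaneously and the simple box-corner argument would collapse.
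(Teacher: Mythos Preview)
Your proof is correct and follows the same two-step strategy as the paper: reduce the supremum over $u\in\R$ to a compact interval uniformly for $\mvbeta$ in a neighborhood of $\mvbeta^*$, then exploit continuity on that compact window. The paper simply asserts the uniform coercivity $\lim_{|u|\to\infty}\sup_{\mvbeta\in O_{\mvbeta^0}} g_{\mvbeta}(u)=-\infty$ without argument and then appeals to uniform continuity of $(\mvbeta,u)\mapsto g_{\mvbeta}(u)$ on the compact set (invoking Lemma~\ref{teclemnotrans} to ensure $I$ is finite and continuous). Your box-corner argument supplies exactly the justification the paper omits for the coercivity step, and your explicit Lipschitz estimate $|\psi(\mvbeta)-\psi(\mvbeta^*)|\le \sum_i|\beta_i-\beta_i^*|R^{e(H_i)}$ is sharper and cleaner than the paper's appeal to uniform continuity---in particular, since $I$ cancels in the difference $g_{\mvbeta}-g_{\mvbeta^*}$, you never need continuity of $I$ at all.
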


\begin{rmk}\label{rmk:thm:partfuncont}
If all the assumptions of theorem \ref{thm:partfuncont} holds for $\beta \in D$, where $D \subset \bR \times \bR_+ \times\ldots \bR_+$ is compact, then the limiting normalizing constant is uniformly continuous in $D$.
\end{rmk}
One of the problems commonly encountered in the context of exponential random graph models is ``degeneracy". We briefly explain this phenomenon in the context of a specific model, the so called edge-triangle model and refer the interested reader to \cite[Section 5]{cha} for an extended discussion and references. Consider the edge-triangle-model given by, 
\[T(k) = \beta_1t(H_1, k) +\beta_2 t(H_3, k)\]
where $H_1$ is a single edge and $H_3$ is a triangle i.e. a complete graph on three vertices. Now under the assumptions of Theorem \ref{thm:spec-gergm} one can compute the limiting normalizing constant when $\beta_2>0$. It is given by $\sup_{u}(\beta_1u + \beta_2 u^2 -I(u))$. Denote the maximizer(s) by $u^*(\beta_1, \beta_2)$. For base measure Bernouli(1/2), it was shown in \cite{cha} that when $\beta_1$ is negative and below a certain threshold $u^*(\beta_1, \beta_2)$ becomes discontinuous in $\beta_2$. In particular, the variational problem had two maximizers at the point of discontinuity. This phase transition phenomenon was investigated in \cite{snijders2006new} and first rigorously proved for the edge-triangle model in \cite{cha}. In particular they \cite{cha} showed some parameter values the typical edge-density of this model is very small and varying the parameter slightly makes the edge-density much higher (see \cite[Theorem 5.1]{cha} for the rigorous statement). This phase transition phenomenon is called degeneracy in \cite{snijders2006new}.

What do our results say about this notion of degeneracy in the general context of GERGM where one can have general weighted base measures? Switching gears, we now study a particular model of relevance in applications in \cite{conjec2str}, the so-called edge-two-star model given by,
\[T(k) = \beta_1t(H_1, k) +\beta_2 t(H_2, k),\]
where $H_2$ is a two-star or a triangle with one deleted edge. This model was analyzed \cite{conjec2str} with truncated normal distribution as the base measure. It was suggested ( \cite[Figure 5]{conjec2str} and related discussion) that the edge-two-star model does not suffer from degeneracy. More precisely the simulation results in \cite{conjec2str} suggested that the edge density is continuous function of $\beta_2$ when the $\beta_1$ is set equal to $-2$. Later we will prove that when the base measure is standard normal distribution this claim is indeed true. For a general class of models that includes edge-two-star model, our next theorem provides an avenue to detect regions of the parameter space that do not suffer from degeneracy. Informally for the edge-two-star (for suitable base measure) model the theorem states that in a parameter region where the variational problem $\sup_{u}(\beta_1u + \beta_2 u^2 -I(u))$ is uniquely maximized the model does not suffer degeneracy. 
\begin{thm}\label{continhightemp}
We work under the same model assumptions as Theorem \ref{thm:partfuncont}. Denote $l(\mvbeta,u) := \sum_{i=1}^s \beta_i u^{e(H_i)} - I(u)$ and $l_{max}(\mvbeta)=\sup_{u}l(\mvbeta,u)$. If for a $\mvbeta^0 \in B$ there is a finite open set $O_ {\beta^0}$ such that $\bar{O}_ {\mvbeta^0} \subset B$ and further for each $\mvbeta \in \bar{O}_ {\mvbeta^0} $,  $l(\mvbeta,u)$ has a unique maximizer (in $u$), then the maximizer denoted by $u^*(\cdot)$ is continuous at $\mvbeta^0$.
\end{thm}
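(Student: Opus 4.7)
The plan is to deduce continuity of $u^*(\cdot)$ at $\mvbeta^0$ by combining three ingredients: the continuity of the value function $l_{max}(\cdot)$ supplied by Theorem \ref{thm:partfuncont}, a local boundedness estimate for the argmax $u^*(\cdot)$ near $\mvbeta^0$, and the uniqueness hypothesis on $\bar{O}_{\mvbeta^0}$. Note first that $l(\mvbeta,u) = \sum_{i=1}^s \beta_i u^{e(H_i)} - I(u)$ is jointly continuous on $\R^s \times \R$: under \eqref{A1} the Cram\'er rate $h$, and hence $I = h/2$, is a finite convex (thus continuous) function on $\R$.

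The core step is to produce $M > 0$ and an open neighborhood $V \subset O_{\mvbeta^0}$ of $\mvbeta^0$ with $|u^*(\mvbeta)| \le M$ for every $\mvbeta \in V$. Since $B$ is open in $\R \times \R_+^{s-1}$ and $\mvbeta^0 \in B$, choose $\epsilon > 0$ small enough that both
\[
\mvbeta^+ := \mvbeta^0 + \epsilon(1,1,\ldots,1), \qquad \mvbeta^- := \mvbeta^0 + \epsilon(-1,1,\ldots,1)
\]
lie in $B$ (admissibility is preserved because $\beta_i^0 \ge 0$ for $i \ge 2$), so that \eqref{finitemaxermm} applies to both. For any $\mvbeta$ with $\|\mvbeta - \mvbeta^0\|_\infty < \epsilon$, the evenness of $e(H_i)$ for $i \ge 2$ together with $e(H_1) = 1$ yields
\[
l(\mvbeta, u) \le l(\mvbeta^+, u) \text{ for } u \ge 0, \qquad l(\mvbeta, u) \le l(\mvbeta^-, u) \text{ for } u \le 0,
\]
since in each case $(\beta_i^\pm - \beta_i)\, u^{e(H_i)} \ge 0$ coordinate-wise. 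Both dominating functions diverge to $-\infty$ as $|u| \to \infty$, whereas $l_{max}(\mvbeta)$ stays bounded on a neighborhood of $\mvbeta^0$ by continuity. This forces $u^*(\mvbeta)$ into a fixed compact set for all $\mvbeta$ in some $V$.

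To conclude, assume for contradiction that $u^*$ fails to be continuous at $\mvbeta^0$: then there is $\mvbeta_n \to \mvbeta^0$ (eventually in $V$) along which $|u^*(\mvbeta_n) - u^*(\mvbeta^0)| \ge \eta > 0$. By the local boundedness above, a further subsequence satisfies $u^*(\mvbeta_{n_k}) \to u^\infty$ with $|u^\infty - u^*(\mvbeta^0)| \ge \eta$. Joint continuity of $l$ gives $l(\mvbeta_{n_k}, u^*(\mvbeta_{n_k})) \to l(\mvbeta^0, u^\infty)$; on the other hand the left side equals $l_{max}(\mvbeta_{n_k})$, which converges to $l_{max}(\mvbeta^0)$ by Theorem \ref{thm:partfuncont}. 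Thus $u^\infty$ is a maximizer of $l(\mvbeta^0, \cdot)$, and the uniqueness hypothesis at $\mvbeta^0$ forces $u^\infty = u^*(\mvbeta^0)$, a contradiction.

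The main obstacle is the local boundedness step: \eqref{finitemaxermm} gives only pointwise coercivity of $l(\mvbeta,\cdot)$, while the argument requires a uniform coercivity envelope on a neighborhood of $\mvbeta^0$. The openness of $B$, the evenness of $e(H_i)$ for $i \ge 2$, and the sign constraints $\beta_i \ge 0$ ($i \ge 2$) are precisely the structural features that make the construction of strictly dominating parameters $\mvbeta^\pm \in B$ possible, providing the required uniform upper envelope separately on each half-line for $u$.
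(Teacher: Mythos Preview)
Your argument is correct, but it follows a different route from the paper's. The paper gives a direct $\epsilon$--$\delta$ proof: from the proof of Theorem~\ref{thm:partfuncont} it extracts a fixed compact set $C(\mvbeta^0)\subset\R$ on which all the optimizations for $\mvbeta\in O_{\mvbeta^0}$ are attained, then uses uniform continuity of $l$ on $\bar O_{\mvbeta^0}\times C(\mvbeta^0)$ to sandwich the value of $l(\mvbeta,\cdot)$ between two levels $r<s<l_{max}(\mvbeta^0)$ and conclude that the maximizer cannot leave the $\epsilon$-ball around $u^*(\mvbeta^0)$. No subsequence extraction and no appeal to the continuity of $l_{max}$ is needed once $C(\mvbeta^0)$ is in hand. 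Your approach instead runs the standard ``argmax theorem'' pattern: local boundedness of $u^*(\cdot)$, subsequential limits, and identification of the limit via continuity of the value function (Theorem~\ref{thm:partfuncont}) plus uniqueness. Your treatment of the local boundedness step via the explicit dominating parameters $\mvbeta^{\pm}\in B$ is in fact more careful than the paper's, which simply asserts the uniform coercivity \eqref{uniformcompact} as a consequence of the pointwise assumption \eqref{finitemaxermm}; your construction supplies exactly the missing envelope argument. One small correction: finiteness (hence continuity) of the Cram\'er rate $h$ on all of $\R$ is not a consequence of \eqref{A1} alone but of the full-support hypothesis on the base measure inherited from Theorem~\ref{thm:partfuncont} (this is Lemma~\ref{teclemnotrans} in the paper).
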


We will now consider GERGM model of the form \eqref{gergm} where $H_1$ is a single edge as before and, $H_j$'s are $j$-stars for all $2\leq j\leq s$.  A $j$-star is a undirected graph on $j+1$ vertices with one ``special" vertex, that is, neighbor to the $j$- other vertices and no edge between these $j$ vertices. For this important class of models we will see that the model is solvable, that is, the limiting normalizing constant can be expressed as a scaler optimization problem for all admissible parameter values and we will not need the restriction $\beta_i>0$ for $2\leq i \leq s$. Further the requirement for the base measure to have non-negative support or the even values of the number of edges in $H_i$'s can be relaxed for this class of models. 
\begin{thm}\label{jstar}
Consider the GERGM model with  $T(k) = \sum_{i=1}^s{\beta_i t(H_i, k)}$ where $H_i$'s are $i$-stars for $2\leq i\leq s$ and $\mvbeta$ is admissible.Then the value of the normalizing constant is given by
\begin{equation}\label{stdnormconst:2}
\lim_{n \rightarrow \infty}{\psi_n} = \sup_{u }{\left(\sum_{i=1}^s \beta_i u^{e(H_i)} - I(u)\right)}
\end{equation}
Let $K$ be the set of maximizers of the function $g(\cdot)$ defined via $g(u):= {\sum_{i=1}^s \beta_i u^{e(H_i)} - I(u)} $. Assume that,
\begin{equation}\label{finitemaxermm}
\lim_{|u| \rightarrow \infty}{\sum_{i=1}^s \beta_i u^{e(H_i)} - I(u)} = -\infty.
\end{equation}
 Then $K$ has finitely many elements and almost surely
\begin{equation}\label{hightemp}
\min_{u \in K}\delta(\tilde{k}_n, \tilde{k}^u) \rightarrow 0.
\end{equation}

\end{thm}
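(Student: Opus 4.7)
The plan is to exploit the star structure of the $H_i$, which reduces the variational problem to a scalar one. For a $j$-star, direct computation gives
\[ t(H_j, k) = \int_0^1 \left(\int_0^1 k(x,y)\, dy\right)^{\!j} dx = \int_0^1 d_k(x)^j\, dx, \]
where $d_k(x) := \int_0^1 k(x,y)\, dy$ is the ``degree function,'' and similarly $t(H_1, k) = \int_0^1 d_k(x)\, dx$. Writing $P(t) := \sum_{i=1}^s \beta_i t^i$, this yields the pointwise identity $T(k) = \int_0^1 P(d_k(x))\, dx$, which holds with no positivity hypothesis on $k$ or $\beta_i$. This is precisely why the sign conditions on $\beta_2,\ldots,\beta_s$ and on $k$ or $e(H_i)$ required in Theorem~\ref{thm:spec-gergm} can be dropped for stars.

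Next, since $h$ is convex (being a Legendre transform), Jensen's inequality applied in the $y$ variable gives $I(k) \geq \tfrac{1}{2}\int_0^1 h(d_k(x))\, dx$. Setting $g(u) := \sum_i \beta_i u^i - I(u)$ (with the convention $I(u) = \tfrac{1}{2}h(u)$ for the scalar rate function at a constant kernel), the two bounds combine to
\[ T(k) - I(k) \leq \int_0^1 g(d_k(x))\, dx \leq \sup_{u \in \mathbb{R}} g(u), \]
and equality is achieved by the constant kernel $k \equiv u^*$ for any maximizer $u^* \in K$. Thus $\sup_{\tilde{k} \in \tilde{\mathcal{K}}}(T(\tilde{k}) - I(\tilde{k})) = \sup_u g(u)$, and this value is already attained inside $\tilde{\mathcal{K}}^l$ once $l \geq \max_{u \in K}|u|$ (finite because \eqref{finitemaxermm} forces $K$ bounded). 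Together with admissibility of $\mvbeta$ and the standard cut-metric continuity of the star densities on $\mathcal{K}^l$, Theorem~\ref{normconst} then delivers formula \eqref{stdnormconst:2}.

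For the concentration, I first identify $\tilde{F}^*$: equality throughout the above chain requires equality in the Jensen step, forcing $y \mapsto k(x,y)$ to be a.e.\ constant, and symmetry $k(x,y)=k(y,x)$ then forces $k$ to be constant overall, so $\tilde{F}^* = \{\tilde{k}^u : u \in K\}$. Finiteness of $K$ is the one technical input: under (A1), $M(\theta)$ is finite for all $\theta$, so $\log M$ is real-analytic and hence so is its Legendre dual $h$ on the interior of its effective domain; therefore $g$ is real-analytic, not identically constant (by \eqref{finitemaxermm}), and its maximizers form a discrete subset of the compact sublevel set $\{g \geq \max g - 1\}$. With $l_0 := \max_{u \in K}|u|$ the set $\tilde{F}^*$ lies in $\tilde{\mathcal{K}}^{l_0}$, so Theorem~\ref{ermmconcentration} applies and gives $R_n(\delta(\tilde{k}_n, \tilde{F}^*) \geq \eta) \leq C e^{-n^2 \gamma}$; Borel--Cantelli then upgrades this to the almost sure convergence \eqref{hightemp}. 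The main (mild) obstacle is precisely this finiteness argument; the rest of the proof is a clean unfolding of the star identity followed by Jensen.
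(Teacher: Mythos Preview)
Your argument is essentially the same as the paper's: both express $t(H_j,k)=\int F(x)^j\,dx$ with $F(x)=\int k(x,y)\,dy$, invoke convexity of $h$ (Jensen) to get $I(k)\geq \int I(F(x))\,dx$, reduce $T(k)-I(k)$ to $\int(P(F(x))-I(F(x)))\,dx\le\sup_u g(u)$, and then use symmetry of $k$ to force the maximizing kernel to be constant before appealing to Theorems~\ref{normconst} and~\ref{ermmconcentration}. Your treatment of the finiteness of $K$ via real-analyticity of $h$ is more careful than the paper's one-line assertion (``convex minus polynomial has finitely many maximizers on a compact interval''), though you should note that this requires the maximizers to lie in the interior of the effective domain of $h$; the paper simply omits these details.
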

As pointed out by an anonymous referee that it would be interesting to study edge-star model under different scaling and one starting point would be the article \cite{rauh2017polytope}. Since all our methods are based on large deviation result with rate $n^2$, to study this model under different scaling one would need more refined large defined result. For example, the result in \cite{chatterjee2016nonlinear} can be used to obtain an evaluation of the normalizing constant when the scaling is different than $n^2$. We will now focus on simple edge-two-star model with standard Normal distribution as the base measure. For convenience we write the model in the following form. For $i< j \in [n]$, let $q_{ij}(=q_{ji})\equiv N(0,1)$ and let the base measure $Q_n$ be the corresponding product measure. The edge-two-star GERGM model is obtained by setting $T$, the exponent of GERGM is given to,
\begin{equation}\label{twostarx}
T(\mvx) = \f{\beta_1}{n^2} \sum_{i\neq j } {x_{ij}} + \f{\beta_2}{n^3}\sum_{i\neq j\neq k \neq i}^n{x_{ij} x_{ik}}.
\end{equation}
Consequently the density with respect to Lebesgue measure is proportional to, $$\exp\left({n^2T(\mvx)- \f{1}{2}\sum_{i<j} {x_{ij}^2}}\right).$$ 
Our next theorem gives explicit value of the normalizing constant when the base measure is standard Gaussian. Our proof technique is to write the second term in \eqref{twostarx} as a suitable quadratic form and then use spectral decomposition. 

\begin{thm}
	\label{thm:normal-full-solvable}
For an edge-two-star model with base measure standard normal distribution the normalizing constant is given by, $\psi_n$, where $\psi_n$ is given by,
\begin{equation}\label{rawnormconst}
\psi_n = \f{1}{\sqrt{1-\f{4\beta_2(n-1)}{n}}} \exp{\left(\f{\beta_1^2 n(n-1)}{1-4\beta_2\f{(n-1)}{n}}\right)} \left( {1-\f{2\beta_2(n-2)}{n}} \right)^{-\f{(n-1)}{2}}.
\end{equation}
whenever $n\geq 3$ and $\beta_2 < \f{n}{4(n-1)}$.
In particular,
\begin{equation}\label{limitnormconst}
\lim_{n \rightarrow \infty}\f{1}{n^2}\ln{\psi_n} = {\left(\f{\beta_1^2 }{1-4\beta_2}\right)}.
\end{equation}

\end{thm}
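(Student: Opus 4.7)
The plan is to recognize $\psi_n$ as a finite-dimensional Gaussian integral and evaluate it by spectral decomposition. Viewing $\mvx = (x_{ij})_{i<j}$ as a vector in $\mathbb{R}^N$ with $N = \binom{n}{2}$ and setting $d_i := \sum_{j \ne i} x_{ij}$, the two-star statistic can be rewritten as a quadratic form $\sum_i d_i^2 = \mvx^\top(2I + L)\mvx$, where $L$ is the adjacency matrix of the line graph of $K_n$ (equivalently, the Johnson graph $J(n,2)$). Consolidating with the linear edge term $2\beta_1 \mvone^\top \mvx$ and with $-\tfrac{1}{2}\|\mvx\|^2$, the log-density takes the form $\mvb^\top \mvx - \tfrac{1}{2}\mvx^\top A \mvx$, with $\mvb = 2\beta_1 \mvone$ and $A = (1 - \tfrac{4\beta_2}{n}) I - \tfrac{2\beta_2}{n} L$.

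The essential spectral input is that $L$ has the classical three-part spectrum $\{2(n-2),\, n-4,\, -2\}$ with multiplicities $1$, $n-1$, $n(n-3)/2$, with $\mvone$ spanning the top eigenspace. The corresponding eigenvalues of $A$ are then $1 - \tfrac{4\beta_2(n-1)}{n}$, $1 - \tfrac{2\beta_2(n-2)}{n}$, and $1$ with the same multiplicities — precisely the three factors visible in \eqref{rawnormconst}, with the trivial ``$1$''-eigenvalues not contributing to $(\det A)^{-1/2}$. Positive-definiteness of $A$ reduces to positivity of the smallest eigenvalue, matching exactly the hypothesis $\beta_2 < n/[4(n-1)]$, so the Gaussian integral converges on precisely the stated parameter region.

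I would then invoke the standard identity $\int_{\mathbb{R}^N} e^{\mvb^\top \mvx - \tfrac12 \mvx^\top A \mvx}\,d\mvx = (2\pi)^{N/2}(\det A)^{-1/2} \exp(\tfrac12 \mvb^\top A^{-1}\mvb)$. Because $\mvb \propto \mvone$ is the distinguished eigenvector of $A$, $\mvb^\top A^{-1}\mvb$ collapses to $\|\mvb\|^2/(1 - \tfrac{4\beta_2(n-1)}{n}) = 2n(n-1)\beta_1^2/(1-\tfrac{4\beta_2(n-1)}{n})$, producing the exponential factor. Dividing by the Lebesgue-normalizer $(2\pi)^{N/2}$ of the standard Gaussian base measure and expanding $\det A$ yields precisely \eqref{rawnormconst}. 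For \eqref{limitnormconst}, the exponential term contributes $\beta_1^2/(1 - 4\beta_2)$ on dividing by $n^2$ and passing $n \to \infty$, while the log-determinant contribution is $O(\log n / n) \to 0$.

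The main obstacle is the spectral step — identifying the combinatorial matrix governing the quadratic form as the Johnson graph $J(n,2)$ and citing its classical spectrum. An alternative that sidesteps this is a Hubbard--Stratonovich transform introducing $n$ auxiliary Gaussians $s_1, \ldots, s_n$ to linearize $\exp(\tfrac{\beta_2}{n}\sum_i d_i^2)$. Then the $x_{ij}$'s decouple entry-wise so the inner Gaussian integral in $\mvx$ is elementary, leaving an $n$-dimensional Gaussian in $\mvs$ whose covariance matrix is a simple combination of $I$ and $\mvone\mvone^\top$. Its two distinct eigenvalues $1 - \tfrac{4\beta_2(n-1)}{n}$ and $1 - \tfrac{2\beta_2(n-2)}{n}$ recover the two factors of \eqref{rawnormconst} directly, with no appeal to the Johnson graph spectrum.
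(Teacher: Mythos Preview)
Your approach is correct and yields exactly \eqref{rawnormconst}, but it differs from the paper's. The paper observes that the exponent depends on $\mvx$ only through the degree vector $\mvy=(d_1,\ldots,d_n)$, which under the standard normal base measure is an $n$-variate Gaussian with covariance $\Sigma=(n-2)I+J$ (here $J=\mvone\mvone^\top$). It then diagonalizes this $n\times n$ matrix, whose spectrum $\{2(n-1),\,n-2\}$ with multiplicities $1,\,n-1$ is elementary, and reads off \eqref{rawnormconst} from a product of one-dimensional Gaussian moment generating functions. By contrast you stay in the full $\binom{n}{2}$-dimensional edge space and diagonalize $A=(1-\tfrac{4\beta_2}{n})I-\tfrac{2\beta_2}{n}L$, which requires invoking the Johnson graph spectrum $\{2(n-2),\,n-4,\,-2\}$; the extra $\tfrac{n(n-3)}{2}$ eigenvalues of $A$ equal to $1$ are precisely the ``leftover'' directions orthogonal to the degree statistics, which the paper never sees because it projected them out at the start. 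Your route is more systematic and would generalize to exponents that are quadratic in $\mvx$ but not functions of $\mvy$ alone, whereas the paper's dimensional reduction is lighter on spectral input and makes the two nontrivial factors in \eqref{rawnormconst} appear with essentially no combinatorics. Your Hubbard--Stratonovich alternative is in spirit closer to the paper's argument: after linearizing and integrating out the $x_{ij}$'s, the $n$-dimensional Gaussian in $\mvs$ has a covariance of the same $aI+bJ$ form as the paper's $\Sigma$.
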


\begin{rmk}\label{remarkaboutnormal}
 In the proof we will see that the normalizing constant is not finite when $\beta_2 \geq \f{n}{4(n-1)}$, hence it is not a proper probability distribution. Further \eqref{limitnormconst} shows that the limiting normalizing constant is analytic and thus there will be no phase transition in an undirected edge-two-star model if the base measure is standard normal. Since our result is valid for all finite $n \geq 3$, taking partial derivatives of $\log{\psi_n}$ with respect to $\beta_1$ shows that the expected number of edges is a continuous function in $\beta_1$. To the best of our knowledge this is the first proof of ``non-degeneracy'' in the undirected edge-two-star GERGM model that was predicted in \cite{conjec2str} and the proof works even for finite values of $n$. On the other hand this proof heavily relies on the fact that the base measure is Gaussian and the specific class of models, thus it is not immediately extendable to other cases. We have heavily used the fact that orthogonal transformation of standard normal distribution is again a standard normal distribution. Thus for other base measures our technique is not applicable. We would like to thank one of the referee for pointing out the paper \cite{hanneke2009discrete}, where the authors studied non-degeneracy of dynamic (temporal) ERGM. Their result is based on studying the entropy of the dynamic model and their technique and results are not immediately applicable in our case. For other (non-temporal) models one approach would be to apply Theorem \ref{continhightemp} and determine for what values of parameters the model does not suffer from degeneracy. We defer this study for future work but the steps required in concrete examples motivates the next result. 
\end{rmk}

\begin{rmk}
	Regarding specific bases measures, as pointed out by a referee,  another important class of models that should be studied are GERGM models with Poisson distribution as the base measure, as formulated in \cite{krivitsky2012exponential}. We hope to analyze the implications of our results in this specific context in subsequent work. 
\end{rmk}

The following theorem focuses on the edge-two-star model when the base measure is not normal distribution. Consider the density of the measure $q_{ij} = q$ is given by the density (w.r.t. to the Lebesgue measure),
\begin{equation}
\label{eqn:base-measure-four}
	q(x) = C_4 \exp{\left(-x^{4}\right)}, \qquad -\infty < x < \infty,
\end{equation} 
where $C_4 {\int_{\mathbb{R}}\exp{\left(-x^{4}\right)} \, dx} =1$. 
 For fixed $l>0$ recall the truncation operator $f_l(x)$ applied to a number $x$ in \eqref{eqn:trunc}. For simplicity write $x^l$ for this operation on $x$. For a finite vector $\vx$, write $\vx^l$ for the vector obtained by entry-wise truncation operation.
\begin{thm}
	\label{thm:edge-two-star-four}
	Consider the edge-two-star model
	\begin{equation}
	\label{eqn:edge-two-four}
		T(\mvx) = \f{\beta_1}{n^2} \sum_{i\neq j } {x_{ij}} + \f{\beta_2}{n^3}\sum_{i\neq j\neq k \neq i}^n{x_{ij} x_{ik}}
	\end{equation}
	with base measure given by \eqref{eqn:base-measure-four}. Then condition {\bf(C2)$^\prime$} holds namely, 
	\begin{equation}\label{verifycond}
	\limsup_{l \rightarrow \infty} \limsup_{n \rightarrow \infty}\f{1}{n^2}\ln \left(C_4^{n \choose 2}\int_{\{|T(\mvx)-T(\mvx^l)| > \epsilon\}} e^{(n^2T(\mvx)- \sum_{i<j}x_{ij}^4 )} \, dx\right) = - \infty.
	\end{equation}
	 In particular the assertion of Theorem \ref{normconst} and Theorem \ref{jstar} hold for all $(\beta_1,\beta_2) \in \bR\times \bR$.  
\end{thm}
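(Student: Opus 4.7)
The plan is to split the quartic penalty $-\sum_{i<j}x_{ij}^4$ into two halves: one half absorbs $n^2 T(\mvx)$ up to an $O(n^2)$ error, while the other half becomes arbitrarily negative on the truncation event once $l$ is large. First I will establish the pointwise upper bound
\begin{equation*}
n^2 T(\mvx) \;\leq\; \tfrac{1}{2}\sum_{i<j} x_{ij}^4 + C n^2,
\end{equation*}
for some constant $C=C(\beta_1,\beta_2)<\infty$. Indeed, the inequalities $|\sum_{i\neq j} x_{ij}|\leq 2\sum_{i<j}|x_{ij}|$ and $|x_{ij}x_{ik}|\leq \tfrac12(x_{ij}^2+x_{ik}^2)$, combined with the counting identity $\sum_{i\neq j\neq k\neq i} x_{ij}^2 = 2(n-2)\sum_{i<j}x_{ij}^2$, give $n^2 T(\mvx)\leq 2|\beta_1|\sum_{i<j}|x_{ij}| + 2|\beta_2|\sum_{i<j}x_{ij}^2$; the claim then follows edge-by-edge from the elementary Young-type bound $2|\beta_1||t|+2|\beta_2|t^2 \leq \tfrac12 t^4 + C_0$ valid for some $C_0=C_0(\beta_1,\beta_2)$.

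Next, I will show that on the event $\{|T(\mvx)-T(\mvx^l)|>\epsilon\}$ one has $\sum_{i<j}x_{ij}^4 \geq \epsilon n^2 l^2/C_1$ for some constant $C_1 = C_1(\beta_1,\beta_2)$ and all $l\geq 1$. Setting $y_{ij}:=x_{ij}-x_{ij}^l$, note that $y_{ij}\neq 0$ forces $|x_{ij}|>l$. Expanding $x_{ij}x_{ik}-x_{ij}^l x_{ik}^l = y_{ij}y_{ik}+y_{ij}x_{ik}^l+x_{ij}^l y_{ik}$ and using $|x_{ij}^l|\leq l$ together with the same counting arguments as before yields
\begin{equation*}
|T(\mvx)-T(\mvx^l)| \;\leq\; \tfrac{2|\beta_1|+4|\beta_2|l}{n^2}\sum_{i<j}|y_{ij}| \,+\, \tfrac{2|\beta_2|}{n^2}\sum_{i<j} y_{ij}^2.
\end{equation*}
The crucial observation is that on $\{|x_{ij}|>l\}$ one has $|y_{ij}|\leq |x_{ij}|\leq x_{ij}^4/l^3$ and $y_{ij}^2\leq x_{ij}^2\leq x_{ij}^4/l^2$, so for $l\geq 1$ the right-hand side is at most $C_1(n^2 l^2)^{-1}\sum_{i<j}x_{ij}^4$, which gives the desired lower bound on $\sum x_{ij}^4$ on the truncation event.

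Combining the two estimates, on the event of interest one has
\begin{equation*}
n^2 T(\mvx) - \sum_{i<j}x_{ij}^4 \;\leq\; -\tfrac{1}{4}\sum_{i<j}x_{ij}^4 - \tfrac{\epsilon n^2 l^2}{4 C_1} + C n^2,
\end{equation*}
so integrating and multiplying by $C_4^{\binom{n}{2}}$ bounds the expression inside the logarithm of \eqref{verifycond} by $(C_4 D)^{\binom{n}{2}} e^{C n^2 - \epsilon n^2 l^2/(4 C_1)}$, with $D:=\int_{\mathbb{R}} e^{-x^4/4}\,dx<\infty$. Dividing by $n^2$, taking $\limsup_n$ and then $\limsup_l$ sends the expression to $-\infty$, which is exactly {\bf (C2)$^\prime$}. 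Condition {\bf (C1)} is immediate since $T$ is a linear combination of homomorphism densities, which are bounded and continuous in cut metric on each $\mathcal{K}^l$, and integrability \eqref{assT} is a consequence of the first-paragraph pointwise bound; the remaining assertions then follow from Theorem \ref{normconst} and Theorem \ref{jstar}. The main technical obstacle is the second paragraph: the cross term $x_{ij}x_{ik}$ in $T$ couples distinct edges, and the decisive $l^{-2}$ gain comes precisely from the estimates $|x|\leq x^4/l^3$ and $x^2\leq x^4/l^2$ that are valid only on $\{|x|>l\}$. Any cruder absorption would fail to produce a bound whose exponent actually diverges with $l$.
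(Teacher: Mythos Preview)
Your proof is correct and takes a genuinely different route from the paper. Both arguments start from essentially the same estimate on $|T(\mvx)-T(\mvx^l)|$ (the paper's display \eqref{formarkovest} is your second-paragraph bound), but then diverge. The paper applies a Chernoff-type inequality with an auxiliary parameter $M$, factorizes the resulting integral into a one-dimensional integral, splits that integral into the three regions $(-\infty,-l)$, $[-l,l]$, $(l,\infty)$, bounds the tails by $C(M)e^{-2M|\beta_2|l^2-2M|\beta_1|l}$, and only at the very end sends $M\to\infty$. You instead exploit the quartic penalty directly: half of $-\sum x_{ij}^4$ swallows $n^2 T$ up to an $O(n^2)$ constant via Young's inequality, and the other half is converted into the decisive $-\epsilon n^2 l^2/(4C_1)$ term using the pointwise inequalities $|x|\le x^4/l^3$ and $x^2\le x^4/l^2$ on $\{|x|>l\}$. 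Your argument is shorter and avoids both the extra parameter $M$ and the case analysis of the one-dimensional integral; the paper's approach, on the other hand, would adapt more mechanically to base measures whose tail is not exactly $e^{-x^4}$, since it only needs finiteness of a certain one-dimensional integral rather than the specific power-comparison $x^2\le x^4/l^2$.
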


\begin{rmk}
Theorem \ref{thm:edge-two-star-four} tells us that for edge-two-star model where the base measure has density proportional to $e^{-x^4}$, the limiting normalizing constant can always be computed via a scaler optimization problem. In particular the normalizing constant will be equal to $\sup_{u}\left(\beta_1u+\beta_2 u^2 -I(u)\right)$. Here $I(.)$ is the Cramer rate function of the distribution with density given by \eqref{eqn:base-measure-four}. It also gives (using Theorem \ref{jstar}) that a typical observation from this model always concentrate around constant functions where the distance is measured by cut-metric.
\end{rmk}

\section{Graph Homomorphisms and scope of GERGM}
\label{sec:grr-hom-res}
It is now natural to ask about appropriate choices for the function $T$. In the context of applications, the specification of the functionals are domain and application specific.  In the context of the usual exponential random graph model (with unweighted adjacency matrices) commonly used choices are homomorphism counts of certain fixed graphs, for example the number of edges, two-stars, triangles and so on as we have described in some of the above results. In the context of weighted graphs and the general metric space of kernels $(\tilde{\cK},\delta)$ that we work on, we would like to expand on notions like `triangles' or `two-stars' etc and in general understand extensions of such functionals in the context of weighted graphs. Further many models in the context of applications have not just edge weights but also ``node-specific'' co-variates. 
The aim of this section is to make headway on these concepts, in particular understand some extensions of standard functionals in the context of unweighted networks which are still continuous in $(\tilde{\cK},\delta)$, that can incorporate both edge weight information and possible node-level co-variate information; we do not aim for completeness, rather we state one result but the main aim is to show some of the issues involved for defining such objects in the weighted context.   
We present the definitions of of homomorphism density from \cite[Chapter 5]{lngl}. Instead of giving the definition in the most general form we follow a step by step approach similar to \cite[Chapter 5]{lngl}. Recall that our main results in the previous Section required functionals of interest to be continuous on the truncated spaces $\cK^l$. Thus we will mainly deal with functionals defined on these bounded spaces and establish one result for the continuity of functionals (Theorem \ref{contnodeweight}). The remark after this result describes why continuity can fail in the context of general functionals in the weighted context in even simple situations and thus specifications need careful thought.  

We start with the definition of homomorphism in simple unweighted graph. Let $G$ and $H$ be two simple graphs (unweighted) and $V(G)$ and $E(G)$ (respectively $V(H)$ and $E(H)$) be the corresponding  vertex set and edge sets of $G$ (respectively of $H$).  
\begin{dfn}
A function $f: V(G) \rightarrow V(H)$ is called a $\bold{homomorphism}$ if it maps adjacent vertices to adjacent vertices. The set of all such possible homomorphisms is denoted by $\hom(G, H)$ and the cardinality of this set $|\hom(G, H)|$ is called homomorphism number. The ratio $\frac{|\hom(G, H)|}{|V(G)|^{|V(H)|}}$ is called the homomorphism density. 
\end{dfn}
Note that a homomorphism does not necessarily map a non-adjacent pair of vertices to another non-adjacent pair. The homomorphism density represents the probability that a uniform random map $V(G) \rightarrow V(H)$ is a homomorphism. Now let us first give the definition of homomorphism number $V(F) \rightarrow V(G)$ when $G$ is a weighted graph with adjacency matrix $A^G$ and $F$ is a simple graph(for example a triangle). We assign the following weight to every map $\phi : V(F) \rightarrow V(G)$
\[
\hom_{\phi}(F, G) = \prod_{(i,j) \in E(F)}{A^G_{\phi(i), \phi(j)}}.
\]
Now we define the homomorphism number $|\hom(F, G)|$ as
\[
|\hom(F, G)| = \sum_{\phi : V(F) \rightarrow V(G)}{\hom_{\phi}(F, G)}.
\]
Note that in the above display $|.|$ does not represent cardinality unless both $F$ and $G$ are un-weighted graphs. 

Now we turn to the  case when $F$ and $G$ both are weighted graphs. This quantity cannot be defined for arbitrary weights and we need some restrictions on the weights. For example if any one of the following two conditions hold then $|\hom(F, G)|$ is well defined:
\begin{enumeratea}
	\item the edge-weights of $F$ are non-negative integers and with the convention $0^0 =1$ (here we do not need any restriction on the weights of $G$), for example in Theorem \ref{thm:normal-full-solvable} the base measure is standard normal (the edge-weights of $G$ can be any real number) and in edge-two-star model the edge-weights are indeed non-negative integers;
	\item the edge-weights of $G$ are positive (in this case we do not need any restriction on the weights of $F$).
\end{enumeratea}

 In general $|\hom(F, G)|$ is defined when the weights in \eqref{condreq} below are well defined. Now let us incorporate node-level covariates. We start with the general framework and then give some specific examples for illustration. Suppose associated with a graph $G$ one has node-weights $\set{\alpha_i(G): i\in [n]}$, where $\alpha_i(G)$  denotes the weight of node $i$ in the graph $G$. 
 For every map $\phi: V(F) \rightarrow V(G)$,  define weights
\begin{equation}\label{condreq}
\alpha_\phi = \prod_{i \in V(F)}{\alpha_{\phi(i)}(G)^{\alpha_{i}(F)}},
\end{equation}
and
\[
\hom_{\phi}(F, G) = {\prod_{(i,j) \in E(F)}{(A^G_{\phi(i), \phi(j)})^{A^F_{i,j}}}  }.
\] 
Let $\bar{\mvalpha}(F,G) = \{\alpha_\phi,  \phi: V(F) \rightarrow V(G)\}$. Define the homomorphism number $|\hom(F, \bar{\alpha}, G)|$ to be 
\begin{equation}\label{homnum}
|\hom(F, \bar{\alpha}, G)| = \sum_{\phi : V(F) \rightarrow V(G)}{\alpha_\phi \hom_{\phi}(F, G)}.
\end{equation}
Note that equation \eqref{homnum} extends the notion of homomorphism to weighted graphs. In particular, if we assume that both $G$ and $F$ are unweighted then \eqref{homnum} is the usual homomorphism number. Since the graphs are now weighted $\alpha_\phi$ represents the weight of the map $\phi$. For unweighted graphs $\alpha_\phi$ can take only two values, zero and one. We refer the reader to \cite[Chapter 5]{lngl} for detailed discussion. Finally we define the homomorphism density $t(F, \bar{\alpha}, G)$ for any weighted graph $F$ and $G$, 
\begin{equation}\label{homden}
t(F, \bar{\alpha}, G) = \frac{|\hom(F, \bar{\alpha}, G)|}{|V(G)|^{|V(H)|}}.
\end{equation}
In \eqref{homden}, as before $|V(G)|$ and $|V(H)|$ denotes the number of nodes in $G$ and $H$. We already know when $F$ is un-weighted(all node-weights and edge weights equal to one) then $t(F, G)$ is continuous in cut-metric  as long as the $G$ has no  node-weight and edge weights of $G$ are bounded. We will generalize this to incorporate node-level covariates. We provide two concrete examples below.

\begin{ex}[Weighted triangle counts]
	Suppose $F$ is a triangle and $G$ be a complete graph on $n$ vertices, then $A_{ij}^G =1$ for all $i\neq j$ and $A_{ii}^G =0$. Thus for a map $\phi: V(F) \rightarrow V(G)$, $\hom_{\phi}(F, G) = \prod_{(i,j) \in E(F)}{A^G_{\phi(i), \phi(j)}} =1$ whenever $\phi$ maps the vertices of $F$ to three distinct vertices in $G$ and equal to zero otherwise. Thus the homomorphism number is equal to $n(n-1)(n-2)$. On the other hand if the node weight of the $i$-th node of graph $G$ is $\alpha_i(G)$, and $F$ is a triangle then the homomorphism number is equal to \[|\hom(F,\bar{\alpha}, G)| = \sum_{1\leq i\neq j \neq k \leq n}{\alpha_i(G) \alpha_j(G) \alpha_k(G)}.\] 
\end{ex}

\begin{ex}[Node-level covariates]
	Suppose  $F$ is just a single vertex with no loop and weight $1$ and $G$ is a graph on $n$ vertices with node weights $\alpha_i(G)$, then with the convention that empty product is one, we will have \[|\hom(F,\bar{\alpha}, G)| = \sum_{i \in [n]} \alpha_i(G).\]
\end{ex}
Now we can generalize the above definitions to a kernel $k \in \mathcal{K}$ as follows. Assume that $\alpha:[0,1] \rightarrow [0, \infty)$ is the ``node-weight" function of  a kernel $k$. Fix (an edge and node) weighted graph,
 \begin{equation}
 \label{eqn:weight-graph-F}
 	F = (V(F), E(F), (A_{ij}^F)_{i,j\in V(F)}, (\alpha_i(F))_{i\in V(F)}).
 \end{equation}  
 Define,
\begin{equation}\label{homdenker}
t(F, \alpha, k) = \int_{[0, 1]^{|V(F)|}}  \prod_{i \in V(F)}{\alpha(x_i)^{\alpha_{i}(F)}}   {\prod_{(i,j) \in E(F)}k(x_i, x_j)^{A^F_{ij}}} \, \prod_{i \in V(F)}{dx_i}
\end{equation}
Recall that a fixed weighted graph $G$ can be mapped to a kernel $k_G$ by \eqref{mapintokernel} and recall the partition of the unit interval $\set{J_i^n:1\leq i\leq n}$ where $J_1^n =[0, \f{1}{n}]$ and $J_i^n = (\frac{i-1}{n}, \frac{i}{n}]$ for $i=2,\ldots,n$. Now for a general node and edge weighted graph $G$ on $n$ vertices:
\[G = (V(G), E(G), (A_{ij}^G)_{i,j\in V(G)}, (\alpha_i(G))_{i\in V(G)}),\]
if we 
 define the function
  $$\alpha_G(x) = \alpha_i(G)^{\alpha_i(F)}\bold{1}_{J_i^n}(x), \qquad 0\le x\le 1.$$ 
     Then it is easy to check that,
\[
t(F, \bar{\alpha}, G) = t(F, \alpha_G, k_G).
\]
By \cite[Theorem 3.7(a)]{gl2} it follows  that \ref{homdenker} is continuous in cut-metric as long as $F$ is an un-weighted graph. We next generalize this result in the case when $F$ has bounded node-weights.
\begin{thm}\label{contnodeweight}
Suppose $F$ is a node-weighted simple graph (edge-weights $A_{ij}^F \equiv 1$).  Let $\set{k_n}_{n\geq 1}$ be a sequence of kernels absolutely bounded by a constant $M$ and converging to $k$ in the cut-metric. For each $n$ assume that kernel $k_n$ has node-weight function $\alpha_n$ and let $\alpha$ be the node weight function of $k$. Assume that $\alpha_n \rightarrow \alpha$ uniformly on $[0, 1]$ as $n\to\infty$. Then, 
\begin{equation}
t(F, \alpha_n, k_n) \rightarrow t(F, \alpha, k) 
\end{equation}
\end{thm}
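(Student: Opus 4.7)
The plan is to prove Theorem \ref{contnodeweight} as a node-weighted extension of the standard counting lemma for simple-graph homomorphism densities, via a two-stage interpolation. First decompose
\begin{equation*}
t(F, \alpha_n, k_n) - t(F, \alpha, k) = \bigl[t(F, \alpha_n, k_n) - t(F, \alpha, k_n)\bigr] + \bigl[t(F, \alpha, k_n) - t(F, \alpha, k)\bigr],
\end{equation*}
so that it suffices to drive each bracket to zero separately.

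For the first bracket I would telescope across the vertices of $F$: at each step swap a single factor $\alpha_n(x_i)^{\alpha_i(F)}$ for $\alpha(x_i)^{\alpha_i(F)}$ while freezing every other factor in \eqref{homdenker}. The uniform convergence $\|\alpha_n - \alpha\|_{\infty} \to 0$ forces $\{\alpha_n\}$ to be uniformly bounded (say by $L$), so the map $x \mapsto x^{\alpha_i(F)}$ is uniformly continuous on $[0, L]$ and $\|\alpha_n^{\alpha_i(F)} - \alpha^{\alpha_i(F)}\|_{\infty} \to 0$ for every $i$. Combined with $|k_n| \le M$ and the finiteness of $V(F)$ and $E(F)$, each of the $|V(F)|$ swaps contributes $o(1)$, so this bracket tends to zero.

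For the second bracket I would telescope across the edges of $F$ in the spirit of the counting lemma. Processing each edge $e = (a, b) \in E(F)$ in turn, swap $k_n(x_a, x_b)$ for $k(x_a, x_b)$ while holding the remaining factors fixed. The error of one such swap rearranges as
\begin{equation*}
\int_{[0,1]^{|V(F)|-2}} C(\vec{z}) \int_{[0,1]^2} \phi_{\vec{z}}(x_a)\, \psi_{\vec{z}}(x_b)\, \bigl[k_n(x_a, x_b) - k(x_a, x_b)\bigr]\, dx_a\, dx_b\, d\vec{z},
\end{equation*}
where the crucial point is that $F$ is a simple graph: since no edge in $F \setminus \{e\}$ joins $a$ directly to $b$, upon fixing $\vec{z} = (x_i)_{i \ne a, b}$ every surviving edge factor either depends on $x_a$ alone or on $x_b$ alone, producing the rank-one structure $\phi_{\vec{z}}(x_a)\psi_{\vec{z}}(x_b)$, with $\|\phi_{\vec{z}}\|_{\infty}, \|\psi_{\vec{z}}\|_{\infty}, |C(\vec{z})|$ uniformly bounded in terms of $M$, $\|\alpha\|_{\infty}$, and the combinatorics of $F$. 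Applying the rank-one test-function characterization of the cut-norm \eqref{eqn:dist-def} to the inner double integral for each fixed $\vec{z}$ then bounds the swap error by a constant multiple of $d(k_n, k)$, which vanishes as $k_n \to k$ in cut-metric.

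The main technical obstacle --- and the step that requires the most care --- is the interplay between cut-metric convergence (defined modulo measure-preserving bijections) and the assumed pointwise uniform convergence of $\alpha_n$, which is sensitive to the choice of representative. Exploiting the joint invariance $t(F, \alpha, k) = t(F, \alpha \circ \sigma, k \circ \sigma)$ under a common measure-preserving rearrangement of the pair $(\alpha, k)$, one selects representatives of $(k_n, \alpha_n)$ and $(k, \alpha)$ for which $d(k_n, k) \to 0$ and $\|\alpha_n - \alpha\|_{\infty} \to 0$ hold simultaneously --- the natural arrangement in the motivating case of kernels obtained from weighted graphs with node covariates via \eqref{mapintokernel}. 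Once this alignment is in place, summing the $|V(F)| + |E(F)|$ telescoping errors completes the proof.
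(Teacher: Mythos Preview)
Your proposal is correct and follows essentially the same approach as the paper's own proof: the same two-term decomposition (node weights change vs.\ kernels change), uniform convergence of $\alpha_n$ for the first piece, and the edge-by-edge Lindeberg telescoping followed by the rank-one cut-norm bound for the second. Your discussion of the measure-preserving-bijection issue is extra care the paper does not take --- the paper's proof implicitly works with the cut-distance $d$ on $\mathcal{K}$ (fixed representatives) rather than $\delta$ on $\tilde{\mathcal{K}}$, so that alignment step is not actually needed under the intended reading of the hypothesis.
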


\begin{rmk}\label{counterexampleforedgeweight}
It is natural to ask what happens if $F$ is an edge-weighted graph. We show by an example that continuity breaks down if one is not careful and thus this topic needs further investigation. Consider a sequence of simple graph $\{G_n\}$ with node-weights equal to one, such that $k_{G_n} \rightarrow k_G$ in cut-metric, where $k_G(x,y) = p$ for all $(x,y) \in D$ for some $0<p<1$. Now let $F$ be a single edge(no-node-weight) with edge-weight $2$ and $F'$ be a single edge (no-node weight) with edge weight $1$. By \cite[Theorem 3.7(a)]{gl2} or Theorem \ref{contnodeweight}, we have $t(F', G_n) \rightarrow t(F',k_G) =p$ as $n\to\infty$. Again, since $G_n$ has no edge weight we have $t(F, G_n) = t(F', G_n) \rightarrow t(F',k_G) = p < p^2 = t(F, k_G)$. This shows that if $F$ is an edge-weighted graph the continuity fails to hold. Note that in this paper we do not distinguish between an edge with weight zero and a missing edge. As kindly pointed out by one of the anonymous referee that this results in the lack of continuity in the aforementioned example. It would be interesting to consider the edge-weights and adjacency matrix as different structures and we defer this study to our future work.
\end{rmk}

\begin{rmk}
	In applications (see \cite{desmarais2012micro}, also see \cite{demuse2017mixing} for mixing time analysis of vertex weighted exponential random graphs) of GERGM one often constructs edge level statistics from the values associated with a node level covariate. The simplest example would be to consider the sum of the values of two nodes, i.e. for a graph $G$ with vertex set $[n]$ associate $\alpha_i(G) + \alpha_j(G)$ to the pair $(i,j)$ for $i,j \in [n]$. Then the statistics becomes $\f{1}{n^2}\sum_{i,j =1}^n(\alpha_i(G) + \alpha_j(G)) = \f{2}{n}\sum_{i=1}^n\alpha_i(G)$. If we associate $\alpha_i(G)\alpha_j(G)$ to the pair $(i,j)$ for $i,j \in [n]$, then the statistics becomes $\f{1}{n^2}\sum_{i\neq j =1}^n\alpha_i(G)\alpha_j(G)$ and this is homomorphism density of an edge in a node-weighted simple graph $G$. One can construct such statistics involving any number of nodes (for example triangle count involves sum of product of all possible triplets of nodes). Theorem \ref{contnodeweight} shows under suitable assumption on the node weights $\alpha_i(G)$ and the graph sequence $G$ the statistics discussed in the last section are continuous in cut-metric. 
	
\end{rmk}

\section{Discussion}
\label{sec:disc}
In this Section we discuss related work as well as suggest some open problems that would have impact in applications. 

	\subsection{Related work:}  Weighted exponential random graph models were theoretically analyzed in \cite{yin2016phase} when the base measure is supported on a bounded interval and in \cite{demuse2017phase} the authors analyzed the phase transition phenomenon for a class of base measures supported on $[0,1]$. In \cite{yin2016phase} the ``no-phase transition" result for standard normal base measure was proved for directed edge-two-star model. Motivated by applications \cite{krivitsky2012exponential} we extend this work when the base measure is supported on the whole real line. We showed for general base measure the model does not suffer degeneracy in ``high-temperature" regime. Also, via an explicit calculation we have showed for standard normal distribution the  undirected edge-two-star model does not admit a phase transition. Finally under certain assumptions we established continuity of homomorphism densities of node-weighted graphs in cut-metric. We have only begun an analysis of this model and for the sake of concreteness, after the general setting of the main result, explore the ramifications for a few base measures. Other examples of bases measures of relevance from applications including count data can be found in \cite{krivitsky2012exponential}. It would be interesting to explore these specific models and rigorously understand degeneracy (or lack thereof) for various specifications motivated by domain applications. 
	
     \subsection{Relevance of this work and open problems:} The GERGM model has stimulated a number of research directions, both in the context of rigorous theory as described above and methodological aimed at efficient simulation under a host of model specifications \cite{conjec2str}. We will now propose a number of open directions motivated by some of the results in this paper. We will occasionally eschew rigor in order to give an understandable overview of the proposed open direction.  We start by briefly describing how these models are used in practice defering a full description to \cite{desmarais2012statistical,conjec2str}.  Given data $\vX_n$ assumed from some distribution $R_n(\cdot, \mvbeta^0)$ as in \eqref{ermmdfn} with specification $T(\cdot)$ of the form, 
\begin{equation}
\label{eqn:tvx-general}
	T(\vx):= \sum_{i=1}^s \beta_i^0 T_i(\vx), \qquad \vx\in \tilde{\cK},
\end{equation}
	where $s$ is fixed,  $T_i(\vx)$ are domain specific continuous functions (see e.g. \eqref{gergm}) and $\mvbeta^0 := (\beta_1^0, \ldots, \beta_s^0)$ are the driving parameters of the models. Here we assume unknown parameter $\mvbeta^0$ and the superscript ``$0$'' is to indicate the ``true'' parameter.  The aim then is to estimate $\mvbeta^0$ from the single observation $\vX_n\sim R_n(\cdot,\mvbeta^0)$. One of the main techniques used is maximum likelihood which in the context of the model in \eqref{ermmdfn} is to use the estimator,  
	\begin{equation}
	\label{eqn:mle-form}
		\hat{\mvbeta}:= \arg \max_{\mvbeta\in {\bR^s}} \left(\sum_{i=1}^s \beta_i T_i(\vX_n) -  \psi_n(\mvbeta)\right).
	\end{equation}
This explains the importance or goal of deriving results such as Theorem \ref{normconst}. In the context of applications, for fixed $n\geq 1$, the starting point in the above optimization problem is getting a handle on $\psi_n(\mvbeta)$ numerically, via MCMC techniques such as Gibbs sampling or Metropolis-Hasting. This suggests the first set of problems.

\begin{open}[Mixing time of MCMC algorithms]
	\label{op:mcmc}
	Fix $\mvbeta$ and a collection of statistics as in \eqref{eqn:tvx-general}. Consider either the Gibbs sampler or the Metropolis-Hastings algorithm for simulating $R_n(\cdot, \mvbeta)$. Establish conditions for quick mixing of these chains. In particular consider the model in \eqref{gergm} in the setting of Theorem \ref{thm:spec-gergm}. Then we conjecture that under general conditions the following behavior should hold:
	\begin{enumeratea}
		\item If \eqref{stdnormconst} has a unique maximizer then the corresponding samplers mix quickly (polynomial in the size of the network $n$). 
		\item If \eqref{stdnormconst} has multiple distinct maximizers then the corresponding samplers take exponentially long to mix. 
	\end{enumeratea}
\end{open}

We direct the interested reader to \cite{levin2009markov,meyn2012markov,robert-casella} for more details on convergence methodology and \cite{shankar} for related results in the context of ERGMs. In the context of the unweighted setting, \cite{cha} derived a number of fundamental results for the model, in particular showing a number of deficiencies  of this model under various model specifications, in particular showing that in various regimes, the model is close to a standard Erdos-Renyi random graph with independent edge probability (especially in the ferro-magnetic regime where the parameters $\beta$ are assumed positive) in the large network $n\to\infty$ limit. Despite this, for finite $n$, practitioners have found that the limits for the normalizing constants derived in \cite{cha} perform surprisingly well for estimation of these models even for small $n$ (e.g. $n=20$). Further, in order to fix issues with the standard ERGM, a number of practitioners have proposed fixes such as the ``alternating signs''  model in \cite{snijders2006new}. Mathematical theory for the improved performance of this model is still lacking (albeit a specific case was studied in \cite{cha}).   It would be interesting to see if (a) the results in this paper allow similar simplifications  of the normalizing constant  for estimation in the weighted context and (b) if the models suffer similar issues as in the unweighted case, if fixes such as the ``alternating signs'' approach work in this context.

The above discussion suggests the following question. 

\begin{open}[Other base measures]
	This paper considered two particular base measures, the normal distribution and density with $e^{-x^4}$ tail. Consider other base measures required in applications. Derive asymptotics for degeneracy or lack thereof for these models. 
\end{open}
Interested readers can find more examples of base measures especially in the unbounded regime in \cite{krivitsky2012exponential}.
 Continuing with the description of the estimation problem \eqref{eqn:mle-form}, there are two main steps: 

\begin{enumeratea}
	\item {\bf Initialization:} Find an initial candidate vertex $\hat{\mvbeta}_0$. Techniques include objects such as maximizing pseudo-likelihood. 
	\item {\bf Maximization and MCMC:} First note that, writing $\E_{\mvbeta}$ for expectation with respect to $R_n(\cdot, \mvbeta)$ for any arbitrary $\mvbeta, \mvbeta^\prime \in \bR^s$, 
	\[\exp(n^2 (\psi_n(\mvbeta^\prime) - \psi_n(\mvbeta))) =  \E_{\mvbeta}\left(\exp(n^2(\mvbeta^\prime - \mvbeta )T(\vX))\right)\]
\end{enumeratea}
Thus if we are able to simulate from the distribution with parameter $\mvbeta$, an estimate of the functional $\psi_n(\mvbeta)$ can be obtained via $M$ samples $(\vX^{\sss(1)}_n, \ldots, \vX_n^{\sss(M)})$ from $R_n(\cdot, \mvbeta)$ via the proxy estimate,
\[\hat{\psi}_{n}(\mvbeta^\prime;\mvbeta):= \frac{1}{M}\sum_{i=1}^M \left(\exp(n^2(\mvbeta^\prime - \mvbeta )T(\vX_n^{\sss(i)}))\right) \]
This suggests the following iterative scheme: Let $\mvbeta_r$ be the current estimator. Obtain $\mvbeta_{r+1}$ via 
\begin{equation}
\label{eqn:mle-quasi}
	\mvbeta_{r+1} = \arg \max_{\mvbeta\in \bR^s} \left(\sum_{i=1}^s \beta_i T_i(\vX_n) -  \hat{\psi}_{n}(\mvbeta;\mvbeta_r)\right).
\end{equation}
Various stopping mechanisms for the above iterative scheme are then implemented in practice. Methodology and numerical techniques  to carry out the above scheme specific to GERGM specifications were formulated in \cite{conjec2str}. The math results in this paper suggest the following.

\begin{open}[Insight for MLE]
	Using the results in this paper, obtain insight into the consistency (or lack thereof) of the above MLE. In particular: 
	\begin{enumeratea}
		\item Initialization of the above algorithm is a major issue and has an enormous impact on the running time of the algorithm (even on networks of size $n=50$ say). In light of Theorem \ref{thm:spec-gergm}, explore conditions under which the pseudo-maximum likelihood estimator a ``good'' starting point in practice. 
		\item Are there regimes under which, one can numerically solve the optimization problem in \eqref{stdnormconst} $\psi_n(\mvbeta)$ and uses these in place of $\psi_n(\mvbeta;\mvbeta_r)$ in \eqref{eqn:mle-quasi}? How do these techniques work in practice? 
	\end{enumeratea}

\end{open}
The next problem we believe is hard but important and we were reminded by a referee.  

\begin{open}[Extensions of these models to the sparse regime]
	All the theory in this paper is built up using (extensions) of the theory of dense graph limits \cite{gl1,gl2,gl3} in the context of the weighted regime. Whilst the current set of applications of this methodology have also largely been in the context of networks (such as migration flow networks in \cite{desmarais2012statistical}) where a non-trivial density of edges were non-zero, applications of this methodology for large networks will inevitably lead to the development of this theory for \emph{sparse} networks where $o(n^{2})$ of the edges are non-zero. Fundamental theory for the sparse regime even in the context of the unweighted binary setting has proven to be extremely challenging \cite{BCCY14,BCCY-2-14}. Can any of this be extended to the weighted context? More importantly, are there settings where the weighted setting makes it \emph{easier} to develop theory for specific base measures?
\end{open}

Finally Section \ref{sec:grr-hom-res} suggests the following vein of research. 
\begin{open}[Continuity of functionals]
	A wide array of specifications including node-specific covariates as well as base measures have been proposed in practice \cite{desmarais2012statistical,krivitsky2012exponential}. Explore and develop general conditions for continuity of these functionals in the context of the space $(\tilde{\cK},\delta)$ defined in Section \ref{sec:graph-lim-ldp} and apply Theorem \ref{normconst} to get precise evaluation of the limiting normalizing constants.  
\end{open}

\section{Proofs}
\label{sec:proof}
This section contains proofs of all our results. 
\subsection{Proof of Theorem \ref{normconst}:}

We start with the following elementary Lemma on the role of truncation on the rate functions on our setup. 

\begin{lem}\label{lemteclem1}
	Fix probability measure $\mu$ on $(\bR, \cB(\bR))$ and assume that $\int e^{\theta u} \, d\mu(u) < \infty$ for all $\theta \in \bR$. For fixed $l> 0$, define,
	\[h_l(x) = \sup_{\theta}\left[ \theta x - \ln \int e^{\theta f_l(u)} \, d\mu(u) \right],\]
 and 
 \[h(x) = \sup_{\theta}\left[ \theta x - \ln \int e^{\theta u} \, d\mu(u) \right].\] 
 Then given any $\eps$, $\exists $ $L(\epsilon) <\infty$ such that for $l> L(\epsilon)$ and all $x\in \bR$, we have,
\begin{equation}\label{teclem1}
h(x) \leq h_l(x) + \epsilon,
\end{equation}

\end{lem}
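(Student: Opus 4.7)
The aim is to deduce $h \le h_l + \eps$ from a uniform estimate at the level of the log-moment generating functions. Write $\Lambda(\theta) = \log M(\theta)$ and $\Lambda_l(\theta) = \log \int e^{\theta f_l(u)}\,d\mu(u)$, so that $h = \Lambda^\ast$ and $h_l = \Lambda_l^\ast$ in the sense of convex conjugation. The first observation is that conjugation reverses pointwise inequalities: if one can show
\[
\sup_{\theta \in \bR}\bigl(\Lambda_l(\theta) - \Lambda(\theta)\bigr) \le \eps
\]
for every $l$ large enough, then for each $x \in \bR$,
\[
h(x) = \sup_\theta[\theta x - \Lambda(\theta)] \le \sup_\theta[\theta x - \Lambda_l(\theta) + \eps] = h_l(x) + \eps,
\]
which is the claim. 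So the entire proof reduces to a uniform-in-$\theta$ upper bound on $\Lambda_l - \Lambda$.

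For that uniform bound, I will decompose the difference of MGFs using the explicit form of $f_l$:
\[
M_l(\theta) - M(\theta) = \int_{u > l}(e^{\theta l} - e^{\theta u})\,d\mu(u) + \int_{u < -l}(e^{-\theta l} - e^{\theta u})\,d\mu(u),
\]
and analyze by the sign of $\theta$. When $\theta \ge 0$ the first integrand is non-positive, while the second is at most $e^{-\theta l}$, giving $M_l(\theta) - M(\theta) \le e^{-\theta l}\mu((-\infty,-l)) \le \mu((-\infty,-l))$; symmetrically, for $\theta \le 0$ one gets $M_l(\theta) - M(\theta) \le \mu((l,\infty))$. To convert these additive bounds into multiplicative ones I need a uniform lower bound on $M(\theta)$. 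When $\mu$ has mass on both sides of $0$, I fix $a < 0 < b$ with $\mu([a-1,a]) > 0$ and $\mu([b,b+1]) > 0$; then $M(\theta) \ge \mu([b,b+1])$ for $\theta \ge 0$ and $M(\theta) \ge \mu([a-1,a])$ for $\theta \le 0$, since $e^{\theta c} \ge 1$ whenever $\theta$ and $c$ share a sign. With $c_0 := \min(\mu([a-1,a]),\mu([b,b+1])) > 0$, one gets $(M_l(\theta) - M(\theta))/M(\theta) \le \max(\mu((-\infty,-l)), \mu((l,\infty)))/c_0$ uniformly in $\theta$, which vanishes as $l \to \infty$ because $\mu$ has all moments. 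The bound on $\Lambda_l - \Lambda$ then follows from $\log(1+x) \le x$.

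The delicate point, which I expect to be the main obstacle, is the half-line case, say $\mu$ supported on $[0,\infty)$, where $M(\theta) \to 0$ exponentially as $\theta \to -\infty$ and the constant lower bound collapses. In that regime, however, the estimate on $M_l - M$ also decays at the matching exponential rate: $M_l(\theta) - M(\theta) \le e^{\theta l}\mu((l,\infty))$, while $M(\theta) \ge \int_0^l e^{\theta u}\,d\mu(u) \ge e^{\theta l}\mu([0,l])$, so the two $e^{\theta l}$ factors cancel and the ratio is at most $\mu((l,\infty))/\mu([0,l]) \to 0$ uniformly in $\theta \le 0$; the case $\theta \ge 0$ is trivial because then $M_l \le M$ directly. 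The analogue for $\mu$ on $(-\infty, 0]$ is symmetric, and for compactly supported $\mu$ the truncation is eventually vacuous. Putting the cases together, given $\eps > 0$ one picks $L(\eps)$ so that the ratio bound drops below $e^\eps - 1$ for all $l > L(\eps)$, and the Legendre duality step produces the required $h(x) \le h_l(x) + \eps$ uniformly in $x \in \bR$.
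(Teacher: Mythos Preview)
Your proof is correct and follows the same strategy as the paper: reduce to showing $\ln M_l(\theta) \le \ln M(\theta) + \eps$ uniformly in $\theta$ (the paper phrases this as $\theta x \le h_l(x) + \ln M(\theta) + \eps$), then use that Legendre conjugation reverses pointwise inequalities. Your case analysis on the support of $\mu$ (two-sided, half-line, compact) is in fact more careful than the paper's, which glosses over the uniform-in-$\theta$ lower bound on $M(\theta)$ needed when passing from the additive estimate to the logarithmic one.
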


\noindent {\bf Proof:}

First note that 
\begin{equation}\label{teclem1eq0}
h(x) = \max\left(\sup_{\{\theta > 0\}}\left[ \theta x - \ln \int e^{\theta u} \, d\mu(u) \right], \sup_{\{\theta < 0\}}\left[ \theta x - \ln \int e^{\theta u} \, d\mu(u) \right] \right).
\end{equation}
Further we have for each fixed $\theta$, 
\begin{equation}\label{teclem1eq1}
\theta x \le  \sup_{\theta^\prime}\left[ \theta^\prime x - \ln \int e^{\theta^\prime f_l(u)} \, d\mu(u) \right] + \ln \int e^{\theta f_l(u)} \, d\mu(u),
\end{equation}

The proof consists of the following: 

\emph{Step 1} For any given $\epsilon>0$ we will show there exists $L^{+}(\epsilon) >0$ such that for all $l> L^{+}(\epsilon)$ and all $x\in \bR$,   $\sup_{\theta >0} \left[ \theta x  -\ln\left(\int e^{\theta u } \, d\mu(u)\right)\right]  \leq h_l(x)  + \epsilon$.  \\ 

\emph{Step 2} For any given $\epsilon>0$ we show there exists  $L^{-}(\epsilon) >0$ such that for all $l> L^{-}(\epsilon)$ and $x\in \bR$,   $\sup_{\theta <0} \left[ \theta x  -\ln\left(\int e^{\theta u } \, d\mu(u)\right)\right]  \leq h_l(x)  + \epsilon$. \\

Note that \emph{Step 1}, \emph{Step 2} and \eqref{teclem1eq0} immediately imply \eqref{teclem1} with $L = \max(L^{+}(\epsilon), L^{-}(\epsilon))$. We will now prove \emph{Step 1} only as the proof of \emph{Step 2} is identical.

\emph{Proof of step 1}
We use definition of $f_l$  and use $\theta>0$ to write the following,
\begin{align}\label{teclemeqn2}
\theta x &\leq h_l(x) + \ln\left[\int_{-\infty}^{-l} e^{-\theta l } \, d\mu(u) + \int_{-l}^{l} e^{\theta u } \, d\mu(u) + \int_{l}^{\infty} e^{\theta l } \, d\mu(u)\right] \notag \\
& \leq  h_l(x) + \ln\left[\int_{-\infty}^{-l}  \, d\mu(u) + \int_{-l}^{l} e^{\theta u } \, d\mu(u) + \int_{l}^{\infty} e^{\theta u } \, d\mu(u)\right] \notag  \\
&= h_l(x) + \ln\left[\mu( -\infty, -l) + \int_{-l}^{\infty} e^{\theta u } \, d\mu(u) \right].
\end{align}
At this end we observe that the quantity inside logarithm in the last display converges uniformly on $\theta >0$ as $l \rightarrow \infty$, to see this,
\begin{align*}
|\mu( -\infty, -l) + \int_{-l}^{\infty} e^{\theta u } \, d\mu(u) -\int_{-\infty}^{\infty} e^{\theta u } \, d\mu(u)| &< \mu( -\infty, -l) + \int_{-\infty}^{-l} e^{\theta u } \, d\mu(u) \\
&< 2 \mu( -\infty, -l).
\end{align*}

Now using the fact that the quantity inside logarithm in \eqref{teclemeqn2} converges uniformly on $\{\theta>0\}$ to $\int e^{\theta u } \, d\mu(u)$ and the limit is strictly positive (hence log is continuous), we can have a positive number $L^{+}(\epsilon)$ such that for all $l >L^{+}(\epsilon)$,
\[
\theta x \leq h_l(x) + \ln\left[\int e^{\theta u } \, d\mu(u)\right] + \epsilon,
\]
for all $\theta >0$, yielding,
\[
\sup_{\theta >0} \left[ \theta x  -\ln\left(\int e^{\theta u } \, d\mu(u)\right)\right]  \leq h_l(x)  + \epsilon.
\]
The proof is complete.

\qed

\noindent {\bf Proof of Theorem \ref{normconst}: }

	{\bf Upper bound:} We will first show
	\begin{equation}\label{eqn:upper-bound}
	\limsup_{n\rightarrow \infty}{\psi_n} \leq 3\epsilon + \liminf_{l \rightarrow \infty} \sup_{\tilde{h} \in \tilde{\mathcal{K}}^l}(T(\tilde{h})-I(\tilde{h}))
	\end{equation}
	Using \eqref{ermm}, for any $l >0$ and $\epsilon>0$, we decompose as follows:
\begin{align}\label{thm1dis1}
\exp(n^2 \psi_n) = \int_{ \{T(\tilde{k}) - T(f_l(\tilde{k})) \geq \epsilon\}} e^{n^2T(\tilde{k})} \, d\tilde{Q}_n(\tilde{k}) + \int_{ \{T(\tilde{k}) - T(f_l(\tilde{k})) < \epsilon\}} e^{n^2T(\tilde{k})} \, d\tilde{Q}_n(\tilde{k})
\end{align}
For the first term in (\ref{thm1dis1}), using \eqref{assneglect} we have,
\begin{equation}\label{bigneglect}
\limsup_{l \rightarrow \infty}\limsup_{n \rightarrow \infty}\f{1}{n^2} \ln \int_{ \{T(\tilde{k}) - T(f_l(\tilde{k})) \geq \epsilon\}} e^{n^2T(\tilde{k})} \, d\tilde{Q}_n(\tilde{k}) \le  -\epsilon',
\end{equation}for some $\epsilon' >0$, alternatively using \eqref{assneglect1} we have,

\begin{equation}\label{bigneglect1}
\limsup_{l \rightarrow \infty}\limsup_{n \rightarrow \infty}\f{1}{n^2} \ln \int_{\{T(\tilde{k}) - T(f_l(\tilde{k})) \geq \epsilon\}} e^{n^2T(\tilde{k})} \, d\tilde{Q}_n(\tilde{k}) = -\infty.
\end{equation}

For the second term in (\ref{thm1dis1}) by change of variable formula we get the following,
\begin{align}\label{eqn:rightcorrectstep1}
\int_{ \{T(\tilde{k}) - T(f_l(\tilde{k})) < \epsilon\}} e^{n^2T(\tilde{k})} \, d\tilde{Q}_n(\tilde{k})  &\leq \int_{ \{T(\tilde{k}) - T(f_l(\tilde{k})) < \epsilon\} } e^{n^2{(T(f_l(\tilde{k})) + \epsilon )}} \, d\tilde{Q}_n(\tilde{k}), \notag \\
& \leq \int_{ \mathcal{\tilde{K}} } e^{n^2{(T(f_l(\tilde{k})) + \epsilon )}} \, d\tilde{Q}_n(\tilde{k}),  \notag \\
&\leq   \int_{ \tilde{\mathcal{K}}^l} e^{n^2({T(\tilde{k}) + \epsilon)}} \, d\tilde{Q}_n f_l^{-1}(\tilde{{k}}). 
\end{align}

{
Now note that $T:\tilde{\cK}^l\to \bR$ is a bounded continuous function. This implies for fixed $\epsilon >0$, we can obtain a finite set $r_l\subseteq \bR$ such that the intervals $\{(a, a+\epsilon): a \in r_l\}$ covers the range of $T$ restricted to $\mathcal{\tilde{K}}^l$. Further for $a\in r_l$, the set $\tilde{C}_a^l:= (T)^{-1}[a,a+\epsilon] \cap \mathcal{\tilde{K}}^l$ is a closed set in $\mathcal{\tilde{K}}^l$. Since $\cup_{a \in r_l}\tilde{C}_a^l$ covers $\mathcal{\tilde{K}}^l$ by construction, we have the following from \eqref{eqn:rightcorrectstep1},
}

\begin{equation*}
 \int_{k \in \tilde{\mathcal{K}^l}} e^{n^2({T(\tilde{k}) + \epsilon)}} \, d\tilde{Q}_nf_l^{-1}(\tilde{{k}})  \leq \sum_{a \in r_l} \int_{  \tilde{C}_a^l } e^{n^2(T(\tilde{k}) + \epsilon)} \, d\tilde{Q}_nf_l^{-1}(\tilde{{k}}) 
\end{equation*}
Now in the right hand side of the last display, if $\tilde{k} \in \tilde{C}_a^l$ then $T(\tilde{k}) \leq a+\epsilon$, yielding,
\begin{align*}
 \int_{k \in \tilde{\mathcal{K}^l}} e^{n^2({T(\tilde{k}) + \epsilon)}} \, d\tilde{Q}_nf_l^{-1}(\tilde{{k}})  &\leq \sum_{a \in r_l} \int_{  \tilde{C}_a^l } e^{n^2(T(\tilde{k}) + \epsilon)} \, d\tilde{Q}_nf_l^{-1}(\tilde{{k}}),  \\
& \leq |r_l| \sup_{a \in r_l} {e^{n^2(a + 2\epsilon)}\tilde{Q}_n f_l^{-1}(\tilde{C}_a^l)}. 
\end{align*}
Since the support of $\tilde{Q}_n f_l^{-1}$ is bounded,  using the upper bound \eqref{ldpclosed} in Theorem \ref{thm:cha-varadhan} results in the following estimate,
\begin{align*}
&\limsup_{n \rightarrow \infty}{\frac{1}{n^2} \ln{\int_{k \in \tilde{\mathcal{K}} \cap \{T(\tilde{k}) - T(f_l(\tilde{k})) < \epsilon\}} e^{n^2T(\tilde{k})} \, d\tilde{Q}_n(\tilde{k})}}  \\ 
&\leq \sup_{a \in r_l}(a + 2\epsilon + \limsup_{n \rightarrow \infty}{\frac{1}{n^2}{\ln{\tilde{Q}_nf_l^{-1}(\tilde{C}_a^l )}}} ) \\
&  \leq \sup_{a \in r_l}(a + 2\epsilon - \inf_{\tilde{k} \in \tilde{C}_a^l }{I_l(\tilde{k})}) \\
& \leq \sup_{a \in r_l}(a + 2\epsilon - \inf_{\tilde{k} \in \tilde{C}_a^l }{I_l(\tilde{k})}).
\end{align*}
Here, 
\begin{equation}\label{ratefunction1}
I_l(k) = \frac{1}{2} \iint_{D}{h_l(k(x,y))} \, dx \,dy,
\end{equation}
where
\begin{equation}\label{con2A2}
h_l(x) = \sup_{\theta}{[\theta x - \ln M_l(\theta)]}
\end{equation}
 and $M_l(\theta) = \int{e^{\theta f_l(x)}} \,\mu(dx)$.
Now for each $\tilde{k} \in \tilde{C_a^l}$ we have $T(\tilde{k})\geq a$, hence we have
\[
\sup_{\tilde{k} \in \tilde{C_a^l}}(T(\tilde{k})-I_l(\tilde{k})) \geq \sup_{\tilde{k} \in \tilde{C_a^l}}(a- I_l(\tilde{k})) = a - \inf_{\tilde{k} \in \tilde{C_a^l}}{I_l(\tilde{k})}.
\]
Thus we have,
\begin{align}
\limsup_{n \rightarrow \infty} \f{1}{n^2} \ln\int_{ \{T(\tilde{k})) - T(f_l(\tilde{k})) < \epsilon\}} e^{n^2T(\tilde{k})} \, d\tilde{Q}_n(\tilde{k})
 & \leq \sup_{a \in r_l}(a + 2\epsilon - \inf_{\tilde{k} \in \tilde{C}_a^l}{I_l(\tilde{k})}) \notag \\
& \leq 2\epsilon + \sup_{a \in r_l}  \sup_{\tilde{k} \in \tilde{C}_a^l}(T(\tilde{k})-I_l(\tilde{k}))  \notag \\ 
& = 2\epsilon + \sup_{\tilde{k} \in \tilde{\mathcal{K}}^l}(T(\tilde{k})-I_l(\tilde{k})) \label{finiteapprox} 
\end{align}
From Lemma \ref{lemteclem1}, given $\epsilon$ as above, we may choose $L(\epsilon)$ such that for $l > L(\epsilon)$, 
\[
I_l(k) \geq I(k) -\epsilon.
\]
From Lemma \ref{lemteclem1} it follows that the last display holds for any given $\epsilon$ as long as $l$ is large. Thus we have for $l > L(\epsilon)$,
\begin{align}
\limsup_{n \rightarrow \infty} \f{1}{n^2} \ln\int_{ \{T(\tilde{k})) - T(f_l(\tilde{k})) < \epsilon\}} e^{n^2T(\tilde{k})} \, d\tilde{Q}_n(\tilde{k})
&\leq 3\epsilon + \sup_{\tilde{k} \in \tilde{\mathcal{K}}^l}(T(\tilde{k})-I(\tilde{k})) \label{nonneglect} 
\end{align}

To complete the proof of the upper bound, we use estimates on \eqref{bigneglect} (under {\bf (C2)}) or \eqref{bigneglect1} (under {\bf C2}$^\prime$) to show that the second term in \eqref{thm1dis1} does not contribute at $n^2$ scale. To show this we will use the fact $\limsup{\frac{1}{n^2}\ln(a_n + b_n)} = \max(\limsup{\frac{1}{n^2}\ln(a_n)}, {\limsup{\frac{1}{n^2}\ln(b_n)}})$. Combining \eqref{nonneglect} and \eqref{bigneglect} and letting $l\to\infty$ we get,
\begin{equation*}
\limsup_{n\rightarrow \infty}{\psi_n} \leq \max\left( 3\epsilon + \liminf_{l\rightarrow \infty}\sup_{\tilde{h} \in \tilde{\mathcal{K}^l}}(T(\tilde{h})-I(\tilde{h})),   -\epsilon'\right)
\end{equation*}
for some $\epsilon'>0$. By our assumption \ref{assneglect} the second term in the above maximum is strictly smaller than the first one and we have,
\begin{equation}\label{rightside}
\limsup_{n\rightarrow \infty}{\psi_n} \leq 3\epsilon + \liminf_{l\rightarrow \infty}\sup_{\tilde{h} \in \tilde{\mathcal{K}^l}}(T(\tilde{h})-I(\tilde{h}))
\end{equation}

Alternatively using \eqref{nonneglect} and \eqref{bigneglect1} we have,
\begin{align*}
&\limsup_{n\rightarrow \infty}{\psi_n} \\ 
&\leq \max\left(\limsup_{n \rightarrow \infty} \f{1}{n^2} \ln\int_{ \{T(\tilde{k}) - T(f_l(\tilde{k})) < \epsilon\}} e^{n^2T(\tilde{k})} \, d\tilde{Q}_n(\tilde{k}), \limsup_{n \rightarrow \infty}\f{1}{n^2} \ln \int_{ \{T(\tilde{k}) - T(f_l(\tilde{k})) \geq \epsilon\}} e^{n^2T(\tilde{k})} \, d\tilde{Q}_n(\tilde{k})
    \right)
\end{align*}
Letting  $l\to\infty$ implies the second term goes to $-\infty$ and will again yield \eqref{rightside}. Since $\epsilon$ was arbitrary, this completes the proof of the upper bound \eqref{eqn:upper-bound}. 

{\bf Lower bound:} We will now show:

\begin{equation}\label{eqn:lowerbound}
\liminf_{n\to \infty}{\psi_n} \geq  \sup_{\tilde{h} \in \tilde{\mathcal{K}^l}}(T(\tilde{h})-I(\tilde{h}))
\end{equation}
for each $l>0$.
By Continuity of $T$ on $\tilde{\mathcal{K}^l}$,  $\tilde{U}_a^l := (T)^{-1}(a,a+\epsilon) \cap \tilde{\mathcal{K}^l} $ is an open set and by construction $\cup_{a \in r_l} \tilde{U}_a^l$ covers {$\tilde{\mathcal{K}}^l$}.  Thus we have,
\begin{align*}
\exp(n^2 \psi_n) = & \int_{k \in \tilde{\mathcal{K}}} e^{n^2T(\tilde{k})} \, d\tilde{Q}_n(\tilde{k}) \\
&\geq \int_{k \in \tilde{\mathcal{K}}^l} e^{n^2T(\tilde{k})} \, d\tilde{Q}_n(\tilde{k}) \\
&\geq  \int_{{U_a^l}  }e^{n^2a} \, d\tilde{Q}_n(\tilde{k}) \\
& = e^{n^2a} \tilde{Q}_n(U_a^l).
\end{align*}

The third line follows from the fact that if $\tilde{k} \in U_a^l $ then $T(\tilde{k})>a$. 
Thus we have, 
\begin{align*}
\liminf_{n \rightarrow \infty} \psi_n &\geq a +  \liminf_{n \rightarrow \infty} \f{1}{n^2} \ln{\tilde{Q}_n(\tilde{U}_a^l )} 
\end{align*}
Hence by \ref{ldpopen} we have
\begin{align*}
\liminf_{n \rightarrow \infty}{\psi_n} \geq a - \inf_{\tilde{k} \in \tilde{U_a^l}}{I(\tilde{k})}
\end{align*}
Now for each $\tilde{k} \in \tilde{U_a^l}$ we have $T(\tilde{k})< a + \epsilon$. Thus
\[
 \sup_{\tilde{k} \in \tilde{U_a^l} }(T(\tilde{k})-I(\tilde{k})) \leq \sup_{\tilde{k} \in \tilde{U_a^l} }(a+\epsilon- I(\tilde{k})) = a+\epsilon - \inf_{\tilde{k} \in \tilde{U_a^l}}{I(\tilde{k})}.
\]
This results in, 
\begin{align*}
\liminf_{n \rightarrow \infty}{\psi_n} &\geq -\epsilon + \sup_{a \in r_l}\sup_{\tilde{k} \in \tilde{U_a^l}}{(T(\tilde{k}) - I(\tilde{k}))} \\
& = -\epsilon + \sup_{\tilde{k} \in \mathcal{K}^l}{(T(\tilde{k}) - I(\tilde{k}))}
\end{align*}
Now we take $l \rightarrow \infty$ to get,
\begin{equation}\label{leftside}
\liminf_{n \rightarrow \infty}{\psi_n} \geq -\epsilon + \limsup_{l \rightarrow \infty}\sup_{\tilde{k} \in \tilde{\mathcal{K}^l}}{(T(\tilde{k}) - I(\tilde{k}))} 
\end{equation}

Since $\epsilon$ is arbitrary, combining (\ref{rightside}) and (\ref{leftside}) we have our theorem.

\subsection{Proof of Theorem \ref{ermmconcentration}}

\begin{proof}
Fix $\eta>0$. Define 
\[
\tilde{A} =  \{\tilde{k} \in \mathcal{\tilde{K}}: \delta(\tilde{k}, \tilde{F}^*)\geq \eta \}.
\]
By our assumption the maximizers of the function $T(\tilde{k}) - I(\tilde{k})$ are in $\tilde{\mathcal{K}}^{l_0}$ and we also have $\tilde{\mathcal{K}}^{l_0}$ is compact and $T(\tilde{k}) - I(\tilde{k})$ is upper semi-continuous in $\tilde{\mathcal{K}}^{l_0}$ (since $T$ is continuous in $\tilde{\mathcal{K}}^{l_0}$). Thus the set of maximizers $F^*$ is closed set and this implies $\tilde{A}$ is also a closed set. Next we introduce the following quantity,
 
\begin{align*}
\gamma &= \sup_{\tilde{k} \in \mathcal{\tilde{K}}}{(T(\tilde{k})-I(\tilde{k}))} - \sup_{\tilde{k} \in \tilde{A}}{(T(\tilde{k})-I(\tilde{k}))}  \\
&= \sup_{\tilde{k} \in \mathcal{\tilde{K}}^{l_0}}{(T(\tilde{k})-I(\tilde{k}))} - \sup_{\tilde{k} \in \tilde{A}}{(T(\tilde{k})-I(\tilde{k}))} >0.
\end{align*}
The last display follows from the fact that $\tilde{A}$ is a closed set disjoint from the set of maximizers(which is also a closed set). Fixed $l\geq 1$ and $\eps>0$ and from the proof of Theorem \ref{normconst}, recall the finite set $r_l$ and the cover $\cup_{a\in r_l} \tilde{C}_a^l$ of $\tilde{\cK}^l$. 
 Now we estimate the probability $R_n(\tilde{k} \in \tilde{A})$ by, 
\begin{align*}
R_n(\tilde{k} \in \tilde{A}) &= e^{-n^2 \psi_n} \left[ \int_{\tilde{A} \cap \{T(\tilde{k}) - T(f_l(\tilde{k})) > \epsilon\}} e^{n^2T(\tilde{k})} \, d\tilde{Q}_n (\tilde{k}) + \int_{\tilde{A} \cap \{T(\tilde{k}) - T(f_l(\tilde{k})) \leq \epsilon\}} e^{n^2T(\tilde{k})} \, d\tilde{Q}_n (\tilde{k}) \right] \\
& \leq  e^{-n^2 \psi_n} \int_{\tilde{A} \cap \{T(\tilde{k}) - T(f_l(\tilde{k})) > \epsilon\}} e^{n^2T(\tilde{k})} \, d\tilde{Q}_n(\tilde{k})+ e^{-n^2\psi_n} |r_l| \sup_{a \in r_l} {e^{n^2(a + 2\epsilon)}\tilde{Q}_n f_l^{-1}(\tilde{C_a^l} \cap \tilde{A})}
\end{align*}
We will assume that $\tilde{C_a^l} \cap \tilde{A}$ is non empty for each $a$, if not, we will just drop the sets. Now noting that $\tilde{C_a^l} \cap \tilde{A}$ are closed sets, using \eqref{ldpclosed} and Theorem \ref{normconst} we have,
\begin{align*}
&\limsup_{n \rightarrow \infty}{\frac{1}{n^2} \log{R_n(\tilde{k} \in \tilde{A})}} \\
&\leq \max\left(\limsup_{n \rightarrow \infty}\f{1}{n^2} \ln\int_{ \{T(\tilde{k}) - T(f_l(\tilde{k})) > \epsilon\}} e^{n^2T(\tilde{k})} \, d\tilde{Q}_n(\tilde{k})  ,\sup_{a \in r_l}(a+ 2\epsilon - \inf_{\tilde{k}\in \tilde{C_a} \cap \tilde{A}}{I_l(\tilde{k})})\right) - \sup_{\tilde{k} \in \mathcal{\tilde{K}}^l}{(T(\tilde{k}) - I(\tilde{k}))} 
\end{align*}
Now each $\tilde{k} \in \tilde{C_a^l} \cap \tilde{A}$ satisfies $T(\tilde{k}) \geq a$ and by Lemma \ref{lemteclem1} gives $I_l(k) \geq I(k) -\epsilon$ for large enough $l$, thus,
\[
\epsilon + \sup_{\tilde{k} \in \tilde{C_a^l} \cap \tilde{A}}(T(\tilde{k}) - I(\tilde{k})) \geq\sup_{\tilde{k} \in \tilde{C_a^l} \cap \tilde{A}}(T(\tilde{k}) - I_l(\tilde{k})) \geq  a -\inf_{\tilde{k} \in \tilde{C_a^l} \cap \tilde{A}}{I_l(\tilde{k})}
\]
Now combining the last two display and using the assumption \ref{bigneglect} or \ref{bigneglect1}  we have,
\begin{align*}
\limsup_{l \rightarrow \infty}\limsup_{n \rightarrow \infty}{\frac{1}{n^2} \log{R_n(k \in \tilde{A})}} &\leq 3\epsilon + \limsup_{l \rightarrow \infty}\left[ \sup_{a \in r_l}\sup_{\tilde{k} \in \tilde{C_a} \cap \tilde{A}}(T(\tilde{k}) - I(\tilde{k})) - \sup_{\tilde{k} \in \mathcal{\tilde{K}}^l}{(T(\tilde{k}) - I(\tilde{k}))} \right]\\
& = 3\epsilon +\limsup_{l \rightarrow \infty}\left[ \sup_{\tilde{k} \in  \tilde{A} \cap \mathcal{\tilde{K}}^l }(T(\tilde{k}) - I(\tilde{k})) - \sup_{\tilde{k} \in \mathcal{\tilde{K}}^l}{(T(\tilde{k}) - I(\tilde{k}))}\right] \\
&\le 3\epsilon - \gamma.
\end{align*}
Since $\epsilon$ is arbitrary, we let $\epsilon$ go to zero and the proof is complete.
\end{proof}

\subsection{Proof of Theorem \ref{thm:spec-gergm}:}

By Holder's inequality (the holder inequality can be used as we are integrating w.r.t Lebesgue measure over $[0,1]$) and using the assumption that either $k$ is non-negative or $e(H_i)$'s are positive even integers we have for each $2\leq i \leq s$,

\begin{equation*}
t(H_i, k) \leq  \int_{[0,1]^2} k(x_{1}, x_{2})^{e(H_i)} \, dx_1 dx_2.
\end{equation*}

Hence for non-negative $\beta_2,\ldots,\beta_s$
\begin{align*}
T(k) &\leq \beta_1 t(H_1, k) + \sum_{i=2}^s \beta_i \int_{[0,1]^2} k(x_{1}, x_{2})^{e(H_i)} \, dx_1 dx_2 \\
&= \int_{[0,1]^2} \sum_{i=1}^s \beta_i  k(x_{1}, x_{2})^{e(H_i)} \, dx_1 dx_2 
\end{align*}
At this end we have the following
\[
\sup_{k \in \mathcal{{K}}}{(T(k) - I(k))} \leq \sup_{u}{(\sum_{i=1}^s \beta_i u^{e(H_i)} - I(u))}
\] 
Again note that equality in Holder's inequality holds if $k$ is a constant function. Hence the inequality in the above display is in fact an equality Proving \eqref{stdnormconst}. It can be shown that the the constant functions are the only maximizers by the same argument as in \cite{cha}, we omit the details.

Now to prove the second part note that by \eqref{finitemaxermm} we have all maximizers of $(\sum_{i=1}^s \beta_i u^{e(H_i)} - I(u))$ are in a finite interval of the form $[-l, l]$.  Since $I(u)$ is convex function and $\sum_{i=1}^s \beta_i u^{e(H_i)}$ is a polynomial hence we have $\sum_{i=1}^s \beta_i u^{e(H_i)} - I(u)$ will have finitely many maximizers in a compact interval. Now \eqref{finitemaxermm} allows us to use Theorem \ref{ermmconcentration} and we conclude \eqref{hightemp}.

\subsection{Proof of Theorem \ref{thm:partfuncont}}

We start with the following elementary Lemma from large deviations. We provide this for completeness.

\begin{lem}\label{teclemnotrans}
Let $q_{ij}$'s are supported on the whole real line and the moment generating function $M(\theta)$ is continuous for all $\theta \in \bR$. Then the Legendre transform $h(.)$ is a finite continuous function on the whole real line.
\end{lem}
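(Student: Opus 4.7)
The plan is to show the result in two stages: first that $h$ is finite-valued on all of $\bR$, and then deduce continuity as a free consequence of convexity.

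For finiteness, fix $x \in \bR$ and examine the behavior of $\theta \mapsto \theta x - \ln M(\theta)$ as $|\theta|\to\infty$. The key input here will be the assumption that the support of $\mu$ is all of $\bR$. Indeed, pick any $A > x$; then $\mu([A,\infty)) > 0$ and the obvious lower bound $M(\theta) \ge e^{\theta A}\mu([A,\infty))$ gives
\[
\theta x - \ln M(\theta) \;\le\; \theta(x-A) - \ln\mu([A,\infty)),
\]
which tends to $-\infty$ as $\theta \to +\infty$ since $x-A<0$. Symmetrically, choosing $A' < x$ with $\mu((-\infty,A'])>0$ bounds $\theta x - \ln M(\theta)$ above by $\theta(x-A') - \ln \mu((-\infty,A'])$, which tends to $-\infty$ as $\theta \to -\infty$. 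Since $\theta \mapsto \theta x - \ln M(\theta)$ is continuous in $\theta$ (by continuity of $M$ and the fact that $M>0$ everywhere), the supremum is attained at some finite maximizer and is therefore a finite real number.

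For continuity, observe that $h$ is the pointwise supremum over $\theta$ of the affine functions $x \mapsto \theta x - \ln M(\theta)$, hence $h$ is convex on $\bR$. By the standard fact that any convex function on $\bR$ which is finite on an open set is automatically continuous on the interior of its effective domain, and we have just shown this effective domain is all of $\bR$, it follows that $h$ is continuous on $\bR$.

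The only non-routine step is the finiteness argument, which relies essentially on the full-support hypothesis: without it, one could have, say, $\mu$ supported on $[0,\infty)$, in which case $h(x)=+\infty$ for $x<0$ and continuity would fail at the boundary of the effective domain. The convexity/continuity passage is then immediate.
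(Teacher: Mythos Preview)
Your proof is correct and follows essentially the same route as the paper's: both use the full-support hypothesis to bound $M(\theta)\ge e^{\theta A}\mu([A,\infty))$ and thereby force $\theta x - \ln M(\theta)\to -\infty$ as $|\theta|\to\infty$, then invoke convexity of $h$ (as a Legendre transform or as a supremum of affine functions) plus finiteness to get continuity. Your version is slightly more streamlined in that you pick $A>x$ directly rather than first proving $\ln M(\theta)/\theta\to\infty$, but the underlying idea is identical.
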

\begin{proof}
First fix a number $C>0$. Denote the measure corresponding to $q_{ij}$ by $\mu$. Then we have,
\begin{align}\label{teclemnotranseq1}
\liminf_{\theta \rightarrow \infty}{\f{\ln{M(\theta)}}{\theta}} &= \liminf_{\theta \rightarrow \infty}{\f{\ln {\int_{\bR}{e^{\theta x}} \,\mu(dx)}}{\theta}} \notag\\
&\geq \liminf_{\theta \rightarrow \infty}{\f{\ln{\int_{C}^{\infty}{e^{\theta x}} \,\mu(dx)}}{\theta}}  \notag \\
&\geq \liminf_{\theta \rightarrow \infty}{\f{\ln{\int_{C}^{\infty}{e^{\theta C}} \,\mu(dx)}}{\theta}} \notag \\
&\geq C + \liminf_{\theta \rightarrow \infty} \f{\ln{ \mu[C, \infty)}}{\theta} \notag \\
&= C \qquad \text{(since the measure is supported on real line)}.
\end{align}
Since in \eqref{teclemnotranseq1} $C>0$ is arbitrary, 
\[ \liminf_{\theta \rightarrow \infty}{\f{\ln{M(\theta)}}{\theta}} = \infty,\]
and similarly we also have,
\[ \liminf_{\theta \rightarrow -\infty}{-\f{\ln{M(\theta)}}{\theta}} = \infty.\]
Thus we have for each $x\in \bR$,
\[\lim_{\theta \rightarrow \infty} \left(\theta x -\ln M(\theta) \right) = \lim_{\theta \rightarrow \infty} \theta \left( x - \f{\ln M(\theta)}{\theta} \right) = -\infty,\] and
\[\lim_{\theta \rightarrow -\infty} \left(\theta x -\ln M(\theta) \right) = \lim_{\theta \rightarrow -\infty} \theta \left( x - \f{\ln M(\theta)}{\theta} \right) = -\infty .\]
Combining the last two display,
\[\lim_{|\theta| \rightarrow \infty} \left(\theta x -\ln M(\theta) \right) = -\infty.\]
Here we use the fact that $\ln M(\theta)$ is convex and hence the supremum of ${\left(\theta x -\ln M(\theta) \right)}$ is attained at some finite $\theta_{max} = \theta_{max}{(x)}$. This in particular gives $h(x) = \sup_{\theta \in \bR}{\left(\theta x -\ln M(\theta) \right)}= {\left(\theta_{max} x -\ln M(\theta_{max}) \right)} < \infty $ for all $x\in \bR$. Lastly, Legendre transform of convex function ($\log$ of MGF) is convex and finite convex function on $\bR$ is continuous. The proof is complete.
\end{proof}

Now we start the proof of Theorem \ref{thm:partfuncont}.

\begin{proof}
Fix $\mvbeta^0 :=(\beta_1^0,\beta_2^0,\ldots, \beta_s^0) \in B$. Since $\beta_2^0,\ldots, \beta_s^0$ are non negative real numbers then by Theorem \ref{thm:spec-gergm} we have $\lim_{n \rightarrow \infty}{\psi_n} = \sup_{u }{\left(\sum_{i=1}^s \beta_i u^{e(H_i)} - I(u)\right)}$. Consider a finite open set $O_ {\mvbeta^0} \subset B$ such that $\mvbeta^0 \in O_ {\mvbeta^0}$. Now under the assumption \eqref{finitemaxermm} we have,
\begin{equation}\label{uniformcompact}
\lim_{|u| \rightarrow \infty}\sup_{\mvbeta \in O_ {\mvbeta^0}}{\left(\sum_{i=1}^s \beta_i u^{e(H_i)} - I(u)\right)} \rightarrow -\infty.
\end{equation}
 Hence there is a compact set $C(\mvbeta^0)$ such that for any $\mvbeta \in O_ {\mvbeta^0}$,
\begin{equation}\label{uniformcompact1}
\sup_{u}\left(\sum_{i=1}^s \beta_i u^{e(H_i)} - I(u)\right) = \sup_{u \in C(\beta_0)}\left(\sum_{i=1}^s \beta_i u^{e(H_i)} - I(u)\right).
\end{equation}
Now since the base measure is supported on the real line, Lemma \ref{teclemnotrans} gives $I(u)$ is continuous and a continuous function on compact interval is uniformly continuous; the function $l(\beta_1,\ldots,\beta_s,u) := \left(\sum_{i=1}^s \beta_i u^{e(H_i)} - I(u)\right)$ is uniformly continuous in $u \in C(\beta_0)$ for each $\beta \in O_{\beta_0}$, proving our assertion. 
\end{proof}

\subsection{Proof of Theorem \ref{continhightemp}:} 
Let $\mvbeta \in O_ {\mvbeta_0}$(a finite open set) such that $\bar{O}_ {\beta_0} \subset B$. From the proof of Theorem \ref{thm:partfuncont} there exists a compact set $C(\mvbeta^0) \subset \bR$, such that for each $\mvbeta \in O_ {\mvbeta_0} $,  $\sup_{u}l(\mvbeta,u) = \sup_{u \in C(\mvbeta_0)}l(\mvbeta,u)$.  First by the compactness of $C(\mvbeta_0)$  we have, for each $\mvbeta \in C(\mvbeta_0)$, $\sup_{u \in C(\beta_0)}l(\mvbeta,u)$ is attained.  Denote the maximizer by $u^*(\mvbeta)$. Also let $M:= l_{max}(\mvbeta^0) = l(\beta_0,u^*(\mvbeta^0)) $. Fix $\epsilon > 0$. Since the maximizer is unique,  
\[
l(\mvbeta^0,u) <M  \quad \text{if} \quad |u - u^*(\mvbeta^0)| \geq \epsilon.
\]
Now by continuity of $l$ in $u$ we can choose two numbers $r$ and $s$ such that,
\[
l(\mvbeta^0,u)<r<s<M \qquad \text{if} \quad |u - u^*(\mvbeta^0)| \geq \epsilon.
\]
Now choose $\delta>0$ so that, the set of $\mvbeta$ such that $||\mvbeta -\mvbeta^0|| <\delta$ will be inside $O_ {\mvbeta^0}$. Further $l$ is uniformly continuous in $\bar{O}_ {\beta_0} \times \bR$ (by compactness of $\bar{O}_ {\beta_0}$ and Remark \ref{rmk:thm:partfuncont}), thus, 
\begin{equation}\label{teclemgenophasetran1}
l(\mvbeta,u) \leq r \quad \text{if}\quad ||\mvbeta -\mvbeta^0|| <\delta \quad \text{and}\quad|u - u^*(\mvbeta^0)| \geq \epsilon.
\end{equation}
Again continuity of $l$ at $(\mvbeta^0, u^*(\mvbeta^0))$ gives for some $\delta' < \delta$,
\begin{equation}\label{teclemgenophasetran2}
l(\mvbeta, u^*(\mvbeta^0)) > s \quad \text{if}\quad ||\mvbeta -\mvbeta^0|| <\delta'.
\end{equation}
For $\mvbeta$ such that $||\mvbeta -\mvbeta^0|| <\delta'$, from \eqref{teclemgenophasetran2} we get $l_{max}(\mvbeta)$ is at least $s$ and \eqref{teclemgenophasetran1} gives maximum cannot be attained on $|u - u^*(\mvbeta^0)| \geq \epsilon$, this maximum is attained on $|u - u^*(\mvbeta^0)| < \epsilon$. We can rephrase this as follows: given $\epsilon>0$ we can get $\delta'>0$ such that $|u^*(\mvbeta) - u^*(\mvbeta^0)| < \epsilon$ whenever $||\mvbeta -\mvbeta^0|| <\delta'$.  Continuity of $u^*(.)$ at $\mvbeta^0$ follows. 

\subsection{Proof of Theorem \ref{jstar}:}

 First note that $t(H_j, k)$ can be written as the following,
\begin{align*}
t(H_j, k) &= \int{\prod_{k =2}^j k(x_1,x_k)} \, \prod_{k=1}^j dx_k  \\
&= \int{F(x)^j} \, dx,
\end{align*}
where
\[
F(x) = \int{k(x,y)}\, dy.
\]
Now since $h$(defined in \eqref{con2A2}) is convex,
\[
\int{h(k(x,y))} \, dy \geq h(F(x)),
\]
hence
\[
I(k) \geq \int {I(F(x))} \, dx.
\]
and equality holds iff $k(x,y)$ is a constant function of almost (Lebesgue) all $y$. If we write,
\[
P(u) =\sum_{j=1}^s \beta_j u^j ,
\]
thus we have,
\[
T(k) -I(k) = \int{P(F(x))} \, dx -I(k) \leq  \int{\left(P(F(x)) - I(F(x)) \right)} \, dx
\]
by the discussion above the equality in the last display holds when $k(x,y)$ is a constant function of $y$ for almost all $x$ and $M(x)$ equals a constant that maximizes $P(u) -I(u)$. Since $k$ is symmetric we must have $k$ a constant function by the first condition. The rest of the proof is similar to Theorem \ref{thm:spec-gergm}, we omit the details.

\subsection{Proof of Theorem \ref{thm:normal-full-solvable}: }

\begin{proof}
Note that the normalizing constant is the expectation 
\[\psi_n = \mathbb{E}(e^{{\beta_1} \sum_{i,j}{x_{ij}} + \f{\beta_2}{n}\sum_{i}(\sum_{j=1}^n x_{ij})^2 }),\] 
where $x_{ij}$'s are i.i.d. normal distributed with mean zero and variance one for $i<j$ and $x_{ij}=x_{ji}$ with $x_{ii} = 0$. We can write the expectation as, 
\begin{equation}\label{lemnormaleq1}
\mathbb{E}(e^{{\beta_1} \sum_{i,j}{x_{ij}} + \f{\beta_2}{n}\sum_{i}(\sum_{j=1}^n x_{ij})^2 }) = \mathbb{E}(e^{{\beta_1} \sum_{i,}{y_{i}} + \f{\beta_2}{n}\sum_{i}(y_{i})^2 }) = \mathbb{E}(\exp{({\beta_1} \bold{1}'\mvy + \f{\beta_2}{n} \mvy' \mvy)}),
\end{equation}
where $\mvy' = (y_1,\ldots, y_n)$ have a multivariate distribution with mean zero and variance matrix $\Sigma = (\sigma_{ij})_{1\leq i,j\leq n}$ with $\sigma_{ii} = n-1$ and $\sigma_{ij} = 1$ if $i \neq j$. Now define $\mvz = \Sigma^{-\f{1}{2}}\mvy$, clearly $\mvz$ is $n$-variate standard normal distribution (i.e. with independent components). 
Now the exponent in the \eqref{lemnormaleq1} $L(\mvy) := {\beta_1} \bold{1}'\mvy + \f{\beta_2}{n} \mvy' \mvy$ can be re-written as 
\begin{align*}
 L(\mvy) = {\beta_1} \bold{1}'\mvy + \f{\beta_2}{n} \mvy' \mvy &=  {\beta_1} \bold{1}'(\Sigma^{\f{1}{2}}\mvz)+ \f{\beta_2}{n} (\Sigma^{\f{1}{2}}\mvz)' (\Sigma^{\f{1}{2}}\mvz)  \\
 &= {\beta_1} \bold{1}'(\Sigma^{\f{1}{2}}\mvz)+ \f{\beta_2}{n} \mvz'  \Sigma \mvz.
 \end{align*}
 Using the spectral decomposition of $\Sigma = P' \Lambda P$ we have,
 \begin{align*}
{\beta_1} \bold{1}'(\Sigma^{\f{1}{2}}\mvz)+ \f{\beta_2}{n} \mvz'  \Sigma \mvz = {\beta_1} \bold{1}'(P' \Lambda^{\f{1}{2}} P \mvz)+ \f{\beta_2}{n} \mvz'  P' \Lambda P \mvz.
\end{align*}
Since $P$ is orthogonal $\mvu = P \mvz$ also has an $n$-variate standard normal distribution. Further the term in the exponent expressed in terms of $\vu$ can be written as,
\begin{align*}
L(\mvy) = {\beta_1} \bold{1}'(P' \Lambda^{\f{1}{2}} \mvu)+ \f{\beta_2}{n} \mvu' \Lambda \mvu.
\end{align*}
At this end we note that the largest Eigen value of $\Sigma$ is $\lambda_1 = 2(n-1)$ with Eigen vector $\bold{1}' = (1,\ldots,1)$ and all other Eigen values equal to $\lambda_i = n-2$ for $i = 2,\ldots,n$. This further implies the first row of $P$ is $(\f{1}{\sqrt{n}}, \ldots, \f{1}{\sqrt{n}})$ and the orthogonality of $P$ gives $P\bold{1} = (\sqrt{n},0,\ldots,0)'$. Finally,
\begin{align*}
L(\mvy) &= {\beta_1} \bold{1}'(P' \Lambda^{\f{1}{2}} \mvu)+ \f{\beta_2}{n} \mvu' \Lambda \mvu \\
&=  {\beta_1}\sqrt{2n(n-1)}u_1 + \f{2\beta_2}{n} {(n-1)} u_1^2 + \f{\beta_2}{n} {(n-2)}\sum_{i=2}^{n} {u_i^2}
\end{align*}
Using the last calculation finally we can write \eqref{lemnormaleq1} as weighted sum of i.i.d normal random variable,
\begin{align}
\psi_n &=  \mathbb{E}(\exp{({\beta_1}\sqrt{2n(n-1)}u_1 + \f{2\beta_2}{n} (n-1) u_1^2)}) \prod_{i=2}^n {\mathbb{E}(\exp{(\f{\beta_2}{n} (n-2) {u_i^2})})} \notag \\
&=\f{1}{\sqrt{1-\f{4\beta_2(n-1)}{n}}} \exp{\left(\f{\beta_1^2 n(n-1)}{1-4\beta_2\f{(n-1)}{n}}\right)} \prod_{i=2}^n \f{1}{\sqrt{1-\f{2\beta_2(n-2)}{n}}}\notag \\
&= \f{1}{\sqrt{1-\f{4\beta_2(n-1)}{n}}} \exp{\left(\f{\beta_1^2 n(n-1)}{1-4\beta_2\f{(n-1)}{n}}\right)} \left( {1-\f{2\beta_2(n-2)}{n}} \right)^{-\f{(n-1)}{2}}. \label{techline1lemnorm} 
\end{align}
The integral in \eqref{techline1lemnorm} does not exist when $\beta_2 \geq \f{n}{4(n-1)}$. This completes the proof.  
\end{proof}

\subsection{Proof of Theorem \ref{thm:edge-two-star-four}: }

Recall the expression of $T(\cdot)$ from \eqref{eqn:edge-two-four}.  
 Now we have
\begin{align}
|T(\mvx) - T(\mvx^l)| 
&\le \f{|\beta_2|}{n^3}| \sum_{i=1}^n (\sum_{j=1}^n x_{ij} - \sum_{j=1}^n x_{ij}^l + \sum_{j=1}^n x_{ij}^l)^2 -\sum_{i=1}^n (\sum_{j=1}^n x_{ij}^l)^2| + \f{|\beta_1|}{n^2} \sum_{i,j}{|x_{ij} - x_{ij}^l|}\notag \\
&= \f{|\beta_2|}{n^3}\left| \sum_{i=1}^n\left( (\sum_{j=1}^n x_{ij} - \sum_{j=1}^n x_{ij}^l)^2 + (\sum_{j=1}^n x_{ij}^l)^2 + 2(\sum_{j=1}^n x_{ij}^l)(\sum_{j=1}^n x_{ij} - \sum_{j=1}^n x_{ij}^l)\right) -\sum_{i=1}^n (\sum_{j=1}^n x_{ij}^l)^2\right| \notag\\
& + \f{|\beta_1|}{n^2} \sum_{i,j}{|x_{ij} - x_{ij}^l|}\notag 
\end{align}
Simple algebraic manipulations then results in,
\begin{align}
|T(\mvx) - T(\mvx^l)|&\leq \f{|\beta_2|}{n^3}\left| \sum_{i=1}^n (\sum_{j=1}^n x_{ij} - \sum_{j=1}^n x_{ij}^l)^2 + 2 \sum_{i=1}^n (\sum_{j=1}^n x_{ij}^l)(\sum_{j=1}^n x_{ij} - \sum_{j=1}^n x_{ij}^l) \right| + \f{|\beta_1|}{n^2} \sum_{i,j}{|x_{ij} - x_{ij}^l|}\notag 
\end{align}
Now using Cauchy-Schwartz and $|x^l|\leq l$ for all $x\in \bR$ results in, 
\begin{align}
|T(\mvx) - T(\mvx^l)| &\le \f{|\beta_2|}{n^3} \left| \sum_{i=1}^n n\sum_{j=1}^n (x_{ij} - x_{ij}^l)^2 + 2 \sum_{i=1}^n (\sum_{j=1}^n x_{ij}^l)(\sum_{j=1}^n x_{ij} - \sum_{j=1}^n x_{ij}^l) \right| + \f{|\beta_1|}{n^2} \sum_{i,j}{|x_{ij} - x_{ij}^l|}\notag \\
& \le \f{|\beta_2|}{n^2} \sum_{i=1}^n \sum_{j=1}^n (x_{ij} - x_{ij}^l)^2 +  \f{(2l| \beta_2| + |\beta_1|)}{n^2} \sum_{i=1}^n \sum_{j=1}^n |x_{ij} - x_{ij}^l|. \label{formarkovest}
\end{align}
Now using the calculation above we have for all $M>0$,
\begin{align}
\cR_n&:= C_4^{n \choose 2}\int_{|T(\mvx)-T(\mvx^l)| > \epsilon}\exp{\left({\beta_1} \sum_{i,j}{x_{ij}} + \f{\beta_2}{n}\sum_{i,j,k}{x_{ij} x_{ik}}- \sum_{i<j}x_{ij}^4\right)} \,\prod_{i<j}dx_{ij} \notag \\
& =C_4^{n \choose 2} \int_{M n^2 |T(\mvx)-T(\mvx^l)| > M n^2\epsilon}\exp{\left({\beta_1} \sum_{i,j}{x_{ij}} + \f{\beta_2}{n}\sum_{i,j,k}{x_{ij} x_{ik}}- \sum_{i<j}x_{ij}^4\right)} \,\prod_{i<j}dx_{ij} \notag \\
&\leq C_4^{n \choose 2} e^{-M n^2\epsilon}\int \exp{\left(n^2M|T(\mvx)-T(\mvx^l)|  + {\beta_1} \sum_{i,j}{x_{ij}} + \f{\beta_2}{n}\sum_{i,j,k}{x_{ij} x_{ik}}-  \sum_{i<j}x_{ij}^4\right)} \,\prod_{i<j}dx_{ij}.\notag\\
&:= C_4^{n \choose 2} e^{-M n^2\epsilon} \cE_n , \qquad \text{say.}\label{est1}
\end{align}

Using \eqref{formarkovest} and writing $\lambda(d\vx) = \prod_{i<j}dx_{ij} $,
\begin{align*}
\cE_n &\le \int \exp{(M{|\beta_2|} \sum_{i,j} (x_{ij} - x_{ij}^l)^2 +  {M(2l| \beta_2| + |\beta_1|)} \sum_{i,j} |x_{ij} - x_{ij}^l|  + {\beta_1} \sum_{i,j}{x_{ij}} + {\beta_2}\sum_{i,j}{x_{ij}^2}-  \sum_{i<j}x_{ij}^4)} \,\lambda(d\vx)\\
&= \int\exp(\sum_{i< j} \cL(x_{ij}))\lambda(d\vx),
\end{align*}
where $\cL:\bR\to\bR$ is given by,
\[\cL(x):= 2M{|\beta_2|} (x - x^l)^2 + 2M(2l| \beta_2| + |\beta_1|)|x - x^l| + 2\beta_1 x + 2\beta_2 x^2-x^4.  \]
This implies that with $\cR_n$ as in \eqref{est1},
\begin{align*}
 \cR_n &\leq  e^{-M n^2\epsilon} 
 \left(C_4 \int \exp{(2M{|\beta_2|}  (x - x^l)^2 +  {2M(2l| \beta_2| + |\beta_1|)}  |x - x^l|  + {2\beta_1} {x} + {2\beta_2}{x^2}-  x^4)} \, dx\right)^{n \choose 2}.
\end{align*}

Now note that,
\begin{align}\label{inthreeparts}
 &  \int \exp{(2M{|\beta_2|}  (x - x^l)^2 +  {2M(2l| \beta_2| + |\beta_1|)}  |x - x^l|  + {2\beta_1} {x} + {2\beta_2}{x^2}-  x^4)} \, dx \notag \\
 &=  \int_{x < -l} \exp{(2M{|\beta_2|}  (x + l)^2 -  {2M(2l| \beta_2| + |\beta_1|)}  (x + l)  + {2\beta_1} {x} + {2\beta_2}{x^2}-  x^4)} \, dx \notag \\
 &+ \int_{-l \le x \le l} \exp{( {2\beta_1} {x}+ {2\beta_2}{x^2}-  x^4)} \, dx \notag \\
 & +  \int_{ x > l} \exp{(2M{|\beta_2|}  (x - l)^2 +  {2M(2l| \beta_2| + |\beta_1|)}  (x - l)  + {2\beta_1} {x} + {2\beta_2}{x^2}-  x^4)} \, dx .
 \end{align}
 Consider the third term. Simplifying the exponent we get,
 \begin{align}\label{partthree}
 &\int_{ x > l} \exp{(2M{|\beta_2|}  (x - l)^2 +  {2M(2l| \beta_2| + |\beta_1|)}  (x - l)  + {2\beta_1} {x} + {2\beta_2}{x^2}-  x^4)} \, dx  \notag\\
 &= \exp(-2M|\beta_2|l^2 - 2M|\beta_1|l) \int_{ x > l} \exp(2M|\beta_2|x^2 +2M|\beta_2|x +  {2\beta_1} {x} + {2\beta_2}{x^2}-  x^4) \, dx \notag \\
 &\le \exp(-2M|\beta_2|l^2 - 2M|\beta_1|l) C(M).
 \end{align}
 for a constant $C(M)$. Enlarging the constant we can similarly have the following upper bound on the first term of \eqref{inthreeparts},
\begin{align}\label{partone}
& \int_{x < -l} \exp{(2M{|\beta_2|}  (x + l)^2 -  {2M(2l| \beta_2| + |\beta_1|)}  (x + l)  + {2\beta_1} {x} + {2\beta_2}{x^2}-  x^4)} \, dx \notag \\
&\le \exp(-2M|\beta_2|l^2 - 2M|\beta_1|l) C(M).
\end{align}

Now using \eqref{est1}, \eqref{partthree}, \eqref{partone} and \eqref{inthreeparts} we get
\begin{align*}
&\Delta_{n,l} := C_4^{n \choose 2}\int_{|T(\mvx)-T(\mvx^l)| > \epsilon}\exp{({\beta_1} \sum_{i,j}{x_{ij}} + \f{\beta_2}{n}\sum_{i,j,k}{x_{ij} x_{ik}}- \sum_{i<j}x_{ij}^4)} \,\prod_{i<j}dx_{ij} \\
&\le e^{-M n^2\epsilon} \left( 2C_4 C(M)\exp(-2M|\beta_2|l^2 - 2M|\beta_1|l)  +  C_4\int \exp{( {2\beta_1} {x}+ {2\beta_2}{x^2}-  x^4)} \, dx \right)^{n \choose 2} 
\end{align*}
Thus we have,
\begin{align*}
&\limsup_{n \rightarrow \infty}\f{1}{n^2}\ln{\Delta_{n,l}} \\
&\le -M \epsilon + \f{1}{2}\ln\left( 2C_4 C(M)\exp(-2M|\beta_2|l^2 - 2M|\beta_1|l)  +  C_4\int \exp{( {2\beta_1} {x}+ {2\beta_2}{x^2}-  x^4)} \, dx \right),
\end{align*}
and letting $l \rightarrow \infty$,
\begin{equation}\label{estf}
\limsup_{l \rightarrow \infty}\limsup_{n \rightarrow \infty}\f{1}{n^2}\ln{\Delta_{n,l}} \le -M \epsilon + \f{1}{2}\ln\left(C_4\int \exp{( {2\beta_1} {x}+ {2\beta_2}{x^2}-  x^4)} \, dx \right).
\end{equation}
Since \eqref{estf} is true for all $M>0$, we let $M$ go to infinity and we have the result. 
\qed

\subsection{Proof of Theorem \ref{contnodeweight}:}

Fix $\epsilon> 0$.  We can approximate  $t(F, \alpha_n, k_n) - t(F, \alpha, k)$ as follows: using uniform convergence of $\alpha_n \rightarrow \alpha$, $\exists N(\eps)>0$ such that uniformly on $[0,1]^{V(F)}$, for all $n> N(\eps)$,  we have $ |\prod_{i \in V(F)}{\alpha_n(x_i)^{\alpha_{i}(F)}} - \prod_{i \in V(F)}{\alpha(x_i)^{\alpha_{i}(F)}}| < \epsilon$. 
Now 
\begin{align}\label{thmcontterm0}
& t(F, \alpha_n, k_n) - t(F, \alpha, k) = \notag \\
& \int_{[0, 1]^{|V(F)|}}\left( \prod_{i \in V(F)}{\alpha_n(x_i)^{\alpha_{i}(F)}} \prod_{(i,j) \in E(F)}k_n(x_i, x_j)- \prod_{i \in V(F)}{\alpha(x_i)^{\alpha_{i}(F)}} \prod_{(i,j) \in E(F)}k(x_i, x_j) \right)\, \prod_{i \in V(F)}{dx_i}  \notag\\
& = \int_{[0, 1]^{|V(F)|}}\left( \prod_{i \in V(F)}{\alpha_n(x_i)^{\alpha_{i}(F)}} - \prod_{i \in V(F)}{\alpha(x_i)^{\alpha_{i}(F)}}\right)\prod_{(i,j) \in E(F)}k_n(x_i, x_j) \, \prod_{i \in V(F)}{dx_i}   \notag \\
& + \int_{[0, 1]^{|V(F)|}}\prod_{i \in V(F)}{\alpha(x_i)^{\alpha_{i}(F)}} \left( \prod_{(i,j) \in E(F)}k_n(x_i, x_j)-  \prod_{(i,j) \in E(F)}k(x_i, x_j) \right)\, \prod_{i \in V(F)}{dx_i}
\end{align}
The first term in \eqref{thmcontterm0} goes to zero by uniform convergence of $\alpha_n$ to $\alpha$. Next we show that the second term also goes to zero by the convergence of $k_n$ to $k$ in cut-metric. Suppose $E(F) = \{e_1, e_2,\ldots,e_m\}$. For convenience suppose that  $i_t, j_t$ be the endpoints of the edge $e_t$. Using the ``Lindeberg'' trick we get,
\begin{align*}
&\int_{[0, 1]^{|V(F)|}}{ \prod_{i \in V(F)}{\alpha(x_i)^{\alpha_{i}(F)}} \left( \prod_{(i,j) \in E(F)}k_n(x_i, x_j) -   \prod_{(i,j) \in E(F)}k(x_i, x_j)\right)} \, \prod_{i \in V(F)}{dx_i} \\
&= \sum_{t=1}^m{\int_{[0,1]^{|V(F)|}}{ {\prod_{i \in V(F)}{\alpha(x_i)^{\alpha_{i}(F)}}}{\prod_{s<t}k_n(x_{i_s}, x_{j_s})}{\prod_{s>t}k(x_{i_s}, x_{j_s})}}\left( k_n(x_{i_t}, x_{j_t}) - k(x_{i_t}, x_{j_t})\right)} \, \prod_{i \in V(F)}{dx_i}.
\end{align*}

Now consider a term from the above sum. To simplify notation assume that $i_t = 1$ and $j_t = 2$. Now let $X(x_1, x_3,...,x_k)$ be the terms in 
${{\prod_{s<t}k_n(x_{i_s}, x_{j_s})}{\prod_{s>t}k(x_{i_s}, x_{j_s})}} {\prod_{i \in V(F)}{\alpha(x_i)^{\alpha_{i}(F)}}}$ that contain $x_1$ and $Y(x_2, x_3,...,x_k)$ denote the rest of the terms in that product. Thus we now have the following,
\begin{align}
&|{\int_{[0,1]^{|V(F)|}}{{\prod_{i \in V(F)}{\alpha(x_i)^{\alpha_{i}(F)}}}{\prod_{s<t}k_n(x_{i_s}, x_{j_s})}{\prod_{s>t}k(x_{i_s}, x_{j_s})}}\left(k_n(x_{i_t}, x_{j_t}) - k(x_{i_t}, x_{j_t})\right)} \, \prod_{i \in V(F)}{dx_i} | \notag \\
& = |\int_{[0,1]^{k-2}} \left( \int_{[0,1]^2} X(x_1, x_3,...,x_k)Y(x_2, x_3,...,x_k)\left(k_n(x_{1}, x_{2}) - k(x_{1}, x_{2}) \, dx_1 dx_2\right)\right) \, dx_3...dx_k |. \label{thmcontterm2} 
\end{align}

Finally we have $k_n$'s are uniformly bounded and $k_n$ converges to $k$ in cut-metric, hence combining \eqref{thmcontterm0}, \eqref{thmcontterm2} the theorem is proved.

\section*{Acknowledgements}
SB and SuC have been supported by NSF DMS-1613072, DMS-1606839 and ARO grant W911NF-17-1-0010. SB, SC and BD have been partially supported by NSF SES grant 1357622. SC has been partially  supported by NSF SES-1461493, and SES-1514750.  BD has been partially supported by NSF grants SES-1558661, SES-1619644,
SES-1637089, CISE-1320219, SMA-1360104.  We would like to thank Mathew Denny, James Wilson and Sayan Banerjee for illuminating discussions on the relevance of the results in this paper for applications. We thank for referees for reading the paper closely and providing many valuable  suggestions. 


\end{document}